\newtheorem{theorem}{Theorem}[section]
\newtheorem{proposition}[theorem]{Proposition}
\newtheorem{corollary}[theorem]{Corollary}
\newtheorem{lemma}[theorem]{Lemma}
\newtheorem{example}[theorem]{Example}
\newtheorem{remark}[theorem]{Remark}
\newcommand{\uphi}{\underline{\phi}}
\newcommand{\upsi}{\underline{\psi}}
\newcommand{\uf}{\underline{f}}
\newcommand{\uG}{\underline{G}}
\newcommand{\myarrows}[2]{\stackrel[#2]{#1}{\overrightarrow{\underleftarrow{\hspace{1cm}}}}}
\numberwithin{equation}{section}
\DeclareMathOperator{\Index}{index}
\DeclareMathOperator{\Image}{Im}
\begin{document}
\title[On the Index of Harmonic Maps into Complex Projective Spaces]{On the Index of Harmonic Maps from Surfaces to Complex Projective Spaces}
\author{Joe Oliver}
\subjclass[2000]{53C43, 58E20}
\address{School of mathematics \\ University of Leeds \\ Leeds LS2 9JT\\ UK}
\email{mmjlo@leeds.ac.uk}
\thanks{The author is grateful for the financial support of the EPSRC doctoral training grant}

\begin{abstract}
We estimate the dimensions of the spaces of holomorphic sections of certain line bundles to give improved lower bounds on the index of complex isotropic harmonic maps to complex projective space from the sphere and torus, and in some cases from higher genus surfaces. 
\end{abstract}

\maketitle

\section{Introduction}
\emph{Harmonic maps} are smooth maps between Riemannian manifolds which are critical points of the Dirichlet energy functional (see, for example, \cite{EellsLem, urakawa}). The \emph{index} of a harmonic map is a measure of its stability and is generally difficult to calculate. Examples of stable harmonic maps are constant mappings between Riemannian manifolds and holomorphic maps between K\"{a}hler manifolds, which both have $\Index$ $0$ \cite{urakawa}. Any harmonic map from a compact Riemann surface to complex projective space which is neither holomorphic or antiholomorphic is unstable \cite{BurnsBart, Burns et. al.}. Therefore, harmonic maps given by the \emph{Gauss transform} of a full holomorphic map from a Riemann surface of genus $g$ to complex projective space that are neither holomorphic or antiholomorphic are unstable. These maps are of particular interest as they form a large class of harmonic maps called \emph{complex isotropic}, or equivalently of \emph{finite uniton number}; this class includes all harmonic maps from the $2$-sphere (see \cite{EellsWood}). 

We give new bounds on the index of harmonic maps from the $2$-sphere to complex projective space and complex isotropic harmonic maps from the torus to complex projective space (Theorem \ref{general-index-prop}), which improve those in \cite{EellsWood}. We also give new bounds on the index of harmonic maps from higher genus surfaces to complex projective space (Theorem \ref{general-index-prop}), which improve those in \cite{EellsWood} in some cases. This is achieved by recalling that holomorphic vector fields along a harmonic map $\phi$ give smooth variations that contribute to the index of $\phi$ \cite[p.\ 258]{EellsWood}; we improve the known estimates by calculating an estimate for the dimension of the space of holomorphic vector fields along a harmonic map $\phi$ by decomposing the tangent bundle using the harmonic sequence of $\phi$. 

In \cite{EellsWood} J. Eells and J.C. Wood classified all complex isotropic harmonic maps from a Riemann surface $M$ to $\mathbb{CP}^n$. Later F.E. Burstall and J.C. Wood gave an interpretation of this in \cite{BurWood} by considering maps from $M$ to a Grassmannian as subbundles of the trivial bundle $M \times \mathbb{C}^{n+1}$, and developing a technique of analysing harmonic maps from a Riemann surface into a complex Grassmannian using ``diagrams". In \cite{BurWood}, the harmonic maps of \cite{EellsWood} are constructed by a repeated use of a Gauss transform and we will use the interpretation in \cite{BurWood} to calculate bounds on the index of harmonic maps constructed in this way.

The author thanks the referee for very useful comments on this paper. The author also thanks John C. Wood for his helpful guidance and support during the preparation of this article.

\section{Preliminaries}
We recall the construction of complex isotropic harmonic maps given in \cite{BurWood, EellsWood}; for additional reading related to these constructions see \cite{Bolton, Craw, LemWood2, LemWood1, Wolfson2} and for a moving frames approach see \cite{ChernWolfson, Wolfson1}.

\subsection{Subbundles of $M \times \mathbb{C}^{n+1}$} 
Let $M$ be a compact Riemann surface. Let us identify $\mathbb{CP}^n$ with the set of (complex) lines (i.e. one-dimensional complex subspaces in $\mathbb{C}^{n+1}$) in the usual way, so that each point $V \in \mathbb{CP}^n$ is identified with a line in $\mathbb{C}^{n+1}$. The tautological bundle $T$ over $\mathbb{CP}^n$ is the subbundle of the trivial bundle $\mathbb{CP}^n \times \mathbb{C}^{n+1}$ whose fibre at $V \in \mathbb{CP}^n$ is the line $V$ in $\mathbb{C}^{n+1}.$
By decomposing the complexified tangent bundle $T^{\mathbb{C}}\mathbb{CP}^n$ using the complex structure in the usual way we have
$$T^{\mathbb{C}}\mathbb{CP}^n=T^{(1,0)}\mathbb{CP}^n \oplus T^{(0,1)}\mathbb{CP}^n.$$
There is a well-known connection-preserving isomorphism $h:T^{(1,0)}\mathbb{CP}^n \rightarrow L(T,T^{\bot})$ given by
\begin{equation}\label{iso}
h(Z)\sigma=\pi_{T^{\bot}}Z(\sigma)
\end{equation}
where $\sigma$ is a local section of  $T$, $Z \in T^{(1,0)}\mathbb{CP}^n$, $\pi_{T^{\bot}}$ denotes the orthogonal projection onto $T^{\bot}$ and $Z(\cdot)$ denotes differentiation with respect to $Z$  \cite{BurWood, Bolton, EellsWood}.

Consider a smooth map $\phi:M \rightarrow \mathbb{CP}^n$. We may decompose the $\mathbb{C}$-linear extension of its differential $d\phi$ into components
$$\partial\phi :T^{(1,0)}M\rightarrow T^{(1,0)}\mathbb{CP}^n, \qquad \overline{\partial}\phi :T^{(0,1)}M\rightarrow T^{(1,0)}\mathbb{CP}^n.$$

To each map $\phi:M\rightarrow \mathbb{CP}^n$, we may associate the pullback of the tautological bundle $\uphi:=\phi^{-1}T$; this is the complex line subbundle of the trivial bundle $M \times \mathbb{C}^{n+1}$ over $M$ whose fibre at $z$ is the line $\phi(z)$. The orthogonal projection $\pi_{\phi}$ onto $\uphi$ applied to the standard derivation on $M \times \mathbb{C}^{n+1}$ over $M$ induces a connection $^\phi\nabla$ on $\uphi$; on a (local complex) chart $(U,z)$ of $M$ this is given by
\begin{equation}\label{connection-subbundles-def}
^\phi\nabla_{\partial \slash \partial z}\upsilon=\pi_{\phi}\frac{\partial}{\partial z}\upsilon, \qquad ^\phi\nabla_{\partial \slash \partial \overline{z}}\upsilon=\pi_{\phi}\frac{\partial}{\partial \overline{z}}\upsilon,
\end{equation}
for $\upsilon \in \Gamma(\phi^{-1}T)$.

Given mutually orthogonal subbundles $\uphi$ and $\upsi$, as in \cite{BurWood} we define the bundle maps $A'_{\phi,\psi}:\uphi\rightarrow \upsi$ and $A''_{\phi,\psi}:\uphi \rightarrow \upsi$ by
$$A'_{\phi,\psi}(\upsilon)=\pi_{\psi}\frac{\partial}{\partial z}\upsilon \quad \text{and} \quad A''_{\phi,\psi}(v)=\pi_{\psi}\frac{\partial}{\partial \overline{z}}\upsilon,$$
where $\pi_{\psi}$ is the orthogonal projection onto $\upsi$ (some authors \cite{BurSal} interchange $\phi$ with $\psi$ in this notation).
These two maps are adjoint up to sign \cite{BurSal}, more concretely, with $\langle \ ,\ \rangle_{\phi}$ the Hermitian metric on $\uphi$ induced from the flat metric on the trivial bundle $\mathbb{CP}^n \times \mathbb{C}^{n+1}$ then
\begin{equation*}\label{adjoint}
-\langle A'_{\phi,\psi}\upsilon,w\rangle_{\phi} = \langle \upsilon, A''_{\psi,\phi}w\rangle_{\phi} \quad \text{for} \quad \upsilon \in \uphi, \ \ w \in \upsi.
\end{equation*}
A very useful special case of the above is the following, we set
\begin{equation*}\label{specialcaseA}
A'_{\phi}=A'_{\phi,\phi^{\bot}}:\uphi \rightarrow \uphi^{\bot} \quad \text{and} \quad A''_{\phi}=A''_{\phi,\phi^{\bot}}: \uphi \rightarrow \uphi^{\bot}.
\end{equation*}
Then, using the pullback of (\ref{iso}), we have the following isomorphism of bundles over $M$:
\begin{equation}\label{pullback-iso-equation}
\phi^{-1}T^{(1,0)}\mathbb{CP}^n \cong L(\uphi,\uphi^{\bot}).
\end{equation}
Again, this isomorphism is connection-preserving and can be used to identify the bundle maps $A'_{\phi}$ and $A''_{\phi}$ with $\partial\phi({\partial \slash \partial z})$ and $\overline{\partial}\phi({\partial \slash \partial \overline{z}})$ respectively \cite{BurWood}.
We give all bundles their Koszul--Malgrange structure \cite{KosMel}, i.e. that with $\bar{\partial}$-operator given by the $(0,1)$-part of the connection on the respective bundle. It follows that (\ref{pullback-iso-equation}) is an isomorphism of \emph{holomorphic} bundles.
Then we have the following for a smooth map $\phi: M \rightarrow \mathbb{CP}^n$.
\begin{lemma}\label{burlem1}\cite{BurWood}\leavevmode 
\begin{enumerate}[label={\upshape(\roman*)}]
\item The map $\phi$ is holomorphic (respectively antiholomorphic) if and only if $A_{\phi}''=0$ (respectively $A_{\phi}'=0$).
\item The map $\phi$ is harmonic if and only if $A_{\phi}':\uphi \rightarrow \uphi^{\bot}$ is holomorphic, i.e.,
$$A_{\phi}' \circ {^\phi\nabla_{\partial \slash \partial \overline{z}}}= {^{\phi^\bot}\nabla_{\partial \slash \partial \overline{z}}} \circ A_{\phi}',$$
or equivalently, $A_{\phi}'':\uphi \rightarrow \uphi^{\bot}$ is antiholomorphic.
\end{enumerate}
\end{lemma}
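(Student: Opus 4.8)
The plan is to prove both parts by transporting statements about the components of the differential $d\phi$ and about the tension field $\tau(\phi)$ across the holomorphic, connection-preserving isomorphism (\ref{pullback-iso-equation}). Under that isomorphism the bundle maps $A'_{\phi}$ and $A''_{\phi}$ are identified with $\partial\phi(\partial/\partial z)$ and $\overline{\partial}\phi(\partial/\partial\overline{z})$, viewed as sections of $\phi^{-1}T^{(1,0)}\mathbb{CP}^n$. Since this identification is fibrewise an isomorphism of vector spaces and respects the Koszul--Malgrange holomorphic structures, a component of $d\phi$ vanishes (respectively is a holomorphic section) exactly when the corresponding $A$ vanishes (respectively is a holomorphic bundle map).

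For (i), recall that a smooth map between complex manifolds is holomorphic precisely when the $(0,1)$-to-$(1,0)$ part $\overline{\partial}\phi$ of its complexified differential vanishes, and antiholomorphic precisely when the $(1,0)$-to-$(1,0)$ part $\partial\phi$ vanishes (the remaining components being determined by conjugation, as $d\phi$ is real). As $A''_{\phi}$ corresponds to $\overline{\partial}\phi(\partial/\partial\overline{z})$ and this correspondence is a fibrewise isomorphism, $\overline{\partial}\phi\equiv 0$ if and only if $A''_{\phi}=0$; similarly $\partial\phi\equiv 0$ if and only if $A'_{\phi}=0$. This step uses only the complex structures, not the metric.

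For (ii), I would start from the standard expression for the tension field of a map from a Riemann surface: in a local complex chart $(U,z)$, writing $\nabla$ for the pullback of the Levi--Civita connection of $\mathbb{CP}^n$ and $\lambda$ for a positive conformal factor, one has $\tau(\phi)=2\lambda^{-1}\,\nabla_{\partial/\partial\overline{z}}\bigl(\partial\phi(\partial/\partial z)\bigr)+\text{c.c.}$, using that $\nabla^{M}_{\partial/\partial z}\partial/\partial\overline{z}=0$ for a conformal metric together with the symmetry of $\nabla d\phi$. Because $\mathbb{CP}^n$ is Kähler its complex structure is parallel, so $\nabla$ preserves the type decomposition and the displayed $(1,0)$-valued term is precisely the $(1,0)$-part of $\tau(\phi)$, its $(0,1)$-part being the conjugate. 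Hence $\tau(\phi)=0$ if and only if $\nabla_{\partial/\partial\overline{z}}\bigl(\partial\phi(\partial/\partial z)\bigr)=0$, i.e.\ $\partial\phi(\partial/\partial z)$ is a holomorphic section of $\phi^{-1}T^{(1,0)}\mathbb{CP}^n$ in the Koszul--Malgrange sense. Transporting this across (\ref{pullback-iso-equation}) shows $\phi$ is harmonic if and only if $A'_{\phi}$ is a holomorphic section of $L(\uphi,\uphi^{\bot})$. Finally, unwinding what ``holomorphic section'' means for a homomorphism bundle gives the commutation relation $A'_{\phi}\circ {}^{\phi}\nabla_{\partial/\partial\overline{z}}={}^{\phi^{\bot}}\nabla_{\partial/\partial\overline{z}}\circ A'_{\phi}$, since the induced $\overline{\partial}$-operator on $L(\uphi,\uphi^{\bot})$ sends $A'_{\phi}$ to ${}^{\phi^{\bot}}\nabla''\circ A'_{\phi}-A'_{\phi}\circ {}^{\phi}\nabla''$; the equivalent statement that $A''_{\phi}$ is antiholomorphic then follows by complex conjugation, or equivalently from the adjoint relation between $A'$ and $A''$, the adjoint of a holomorphic bundle map being antiholomorphic.

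The main obstacle is the computation underlying (ii): identifying the $(1,0)$-part of $\tau(\phi)$ with $\nabla_{\partial/\partial\overline{z}}\bigl(\partial\phi(\partial/\partial z)\bigr)$. This requires care with the symmetry of the second fundamental form $\nabla d\phi$ and with the Kähler identity that the complex structure of $\mathbb{CP}^n$ is parallel, which together ensure both that no mixed-type terms survive and that $\nabla_{\partial/\partial\overline{z}}$ restricted to $\phi^{-1}T^{(1,0)}\mathbb{CP}^n$ coincides with the Koszul--Malgrange $\overline{\partial}$-operator. Checking that the connection-preserving isomorphism (\ref{pullback-iso-equation}) genuinely intertwines the two $\overline{\partial}$-operators is the remaining point, but this has effectively been granted in the preliminary discussion, so the crux reduces to the tension-field formula.
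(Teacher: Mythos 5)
The paper itself gives no proof of this lemma to compare against: it is imported verbatim from Burstall--Wood \cite{BurWood}. Judged on its own merits, your argument is correct and is essentially the standard proof underlying the citation, built on exactly the identifications the paper's preliminaries set up: part (i) is immediate once $A'_{\phi}$ and $A''_{\phi}$ are identified with $\partial\phi(\partial/\partial z)$ and $\overline{\partial}\phi(\partial/\partial \overline{z})$ under the connection-preserving isomorphism (\ref{pullback-iso-equation}), and part (ii) reduces, via the K\"ahler tension-field computation, to the statement that $\phi$ is harmonic if and only if $\partial\phi(\partial/\partial z)$ is a Koszul--Malgrange-holomorphic section of $\phi^{-1}T^{(1,0)}\mathbb{CP}^n$, which transports across the isomorphism to the stated commutation relation for $A'_{\phi}$.

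One imprecision deserves flagging: your alternative justification of the final equivalence via the adjoint relation. The relation $-\langle A'_{\phi,\psi}\upsilon,w\rangle_{\phi}=\langle\upsilon,A''_{\psi,\phi}w\rangle_{\phi}$ with $\upsi=\uphi^{\bot}$ says that the adjoint of $A'_{\phi}$ is $-A''_{\phi^{\bot},\phi}:\uphi^{\bot}\rightarrow\uphi$, not the map $A''_{\phi}=A''_{\phi,\phi^{\bot}}:\uphi\rightarrow\uphi^{\bot}$ that the lemma concerns; so ``the adjoint of a holomorphic bundle map is antiholomorphic'' yields antiholomorphicity of the wrong map, and turning that into a statement about $A''_{\phi}$ without circularity is not immediate. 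Your primary route, however, is sound: since $\tau(\phi)$ is real, its $(1,0)$- and $(0,1)$-parts vanish together, and the vanishing of the $(0,1)$-part is precisely $\nabla_{\partial/\partial z}\bigl(\overline{\partial}\phi(\partial/\partial\overline{z})\bigr)=0$, i.e.\ that $A''_{\phi}$ is antiholomorphic. The proof therefore stands once the adjoint remark is dropped or corrected.
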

Let $\phi:M \rightarrow \mathbb{CP}^n$ be a non-antiholomorphic harmonic map. After a process of filling out the zeros of $A_{\phi}'$ detailed in \cite[p.\ 266]{BurWood}, according to Lemma \ref{burlem1} the image of $A_{\phi}'$ is a holomorphic subbundle of $\uphi^{\bot}$ which we denote by $\underline{\Image}{A_{\phi}'}$. We call this holomorphic subbundle the \emph{$\partial'$-Gauss bundle} and denote it by $\uG'(\phi)$; we have $\uG'(\phi):=G'(\phi)^{-1}T$ for some smooth map $G'(\phi):M \rightarrow \mathbb{CP}^n$.
Similarly let $\phi:M \rightarrow \mathbb{CP}^n$ be a nonholomorphic harmonic map, then the image of $A_{\phi}''$ is an antiholomorphic subbundle of $\uphi^{\bot}$ denoted $\uG''(\phi)$ and called the \emph{$\partial''$-Gauss bundle}. As before $\uG''(\phi):=G''(\phi)^{-1}T$ for some smooth map $G''(\phi):M \rightarrow \mathbb{CP}^n$.
\begin{lemma}\cite[Proposition 2.3 and Remark]{BurWood}\label{burlem2}
Let $\phi:M \rightarrow \mathbb{CP}^n$ be a harmonic map. If $\phi$ is not antiholomorphic then $G'(\phi)$ is harmonic and $G''(G'(\phi))=\phi$. If $\phi$ is not holomorphic then $G''(\phi)$ is harmonic and $G'(G''(\phi))=\phi$.
\end{lemma}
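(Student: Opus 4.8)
The plan is to set $\psi := G'(\phi)$, so that $\upsi = \underline{\Image}A_{\phi}' = \uG'(\phi)$; since $\phi$ is not antiholomorphic we have $A_{\phi}' \not\equiv 0$ by Lemma \ref{burlem1}(i), so $\psi$ genuinely exists as a map $M \to \mathbb{CP}^n$. Being the image of a bundle map out of the line bundle $\uphi$ (with zeros filled out as in \cite{BurWood}), $\upsi$ is again a line subbundle, orthogonal to $\uphi$. First I would record the elementary but crucial observation that $A_{\phi,\psi}' = A_{\phi}'$: since the image of $A_{\phi}' = \pi_{\phi^\bot}\,\partial/\partial z$ already lies in $\upsi$, projecting onto $\upsi$ instead of onto $\uphi^\bot$ changes nothing on $\uphi$. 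Thus $A_{\phi}'$ is a holomorphic bundle map $\uphi \to \upsi$, and by the adjointness relation $A_{\psi,\phi}''$ is, up to sign, its adjoint.

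The heart of the argument is to show $\underline{\Image}A_{\psi}'' = \uphi$, which is exactly the statement $G''(G'(\phi)) = \phi$. Working on a chart $(U,z)$, choose local nonvanishing sections $e_0$ of $\uphi$ and $e_1$ of $\upsi$, so that $A_{\phi}' e_0 = a\,e_1$ for a function $a$ vanishing only where $A_{\phi}'$ does. Feeding the holomorphicity identity $A_{\phi}' \circ {}^{\phi}\nabla_{\partial/\partial \overline{z}} = {}^{\phi^\bot}\nabla_{\partial/\partial \overline{z}} \circ A_{\phi}'$ of Lemma \ref{burlem1}(ii), applied to $e_0$, into a short computation forces $\pi_{\phi^\bot}(\partial e_1/\partial \overline{z})$ to be a multiple of $e_1$, hence a section of $\upsi$. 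Consequently the $\upsi^\bot$-component of $\partial e_1/\partial\overline{z}$ has no part orthogonal to $\uphi$, so $A_{\psi}'' e_1 = \pi_{\psi^\bot}(\partial e_1/\partial\overline{z}) = \pi_{\phi}(\partial e_1/\partial\overline{z}) = A_{\psi,\phi}'' e_1$; that is, $A_{\psi}''$ takes values in $\uphi$. Since $A_{\psi,\phi}''$ is up to sign the adjoint of $A_{\phi}'$, an isomorphism $\uphi \to \upsi$ away from the zeros of $a$, the map $A_{\psi,\phi}''$ is an isomorphism onto $\uphi$ away from those zeros; filling them out as in \cite{BurWood} gives $\underline{\Image}A_{\psi}'' = \uphi$, whence $G''(G'(\phi)) = \phi$.

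It remains to show $\psi$ is harmonic, which by Lemma \ref{burlem1}(ii) means $A_{\psi}''$ is antiholomorphic. Here I would use the general principle that, for metric connections and the Koszul--Malgrange structures, the adjoint of a holomorphic bundle map between Hermitian holomorphic bundles is antiholomorphic, since taking adjoints interchanges the $(1,0)$- and $(0,1)$-parts of the induced connection on the Hom-bundle. As $\phi$ is harmonic, $A_{\phi}' = A_{\phi,\psi}'$ is holomorphic, so its adjoint $A_{\psi,\phi}'' = \pm(A_{\phi}')^{*}$ is antiholomorphic as a map $\upsi \to \uphi$ for the connections ${}^{\psi}\nabla$ and ${}^{\phi}\nabla$. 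Finally, the identity $\underline{\Image}A_{\phi}' = \upsi$ forces ${}^{\phi}\nabla$ and ${}^{\psi^\bot}\nabla$ to agree on $\uphi$ in the $\partial/\partial z$ direction, which upgrades this to antiholomorphicity of $A_{\psi}'' = A_{\psi,\phi}''$ viewed as a map into $\upsi^\bot$; hence $\psi = G'(\phi)$ is harmonic.

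The second assertion, for $\phi$ not holomorphic, follows by the conjugate argument, interchanging $\partial/\partial z$ with $\partial/\partial\overline{z}$, $A'$ with $A''$, and ``holomorphic'' with ``antiholomorphic'' throughout. I expect the main obstacle to be the computation of the middle two paragraphs: pinning down precisely how harmonicity of $\phi$ prevents $A_{\psi}''$ from leaking out of $\uphi$ into the rest of $\upsi^\bot$, together with careful bookkeeping of the zero-filling needed to convert identities valid away from the zeros of $A_{\phi}'$ into genuine equalities of holomorphic line subbundles.
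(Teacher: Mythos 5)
The paper never proves this lemma: it is imported wholesale from \cite{BurWood} (Proposition 2.3 and the Remark following it), so there is no in-paper argument to compare yours against. Judged on its own merits, your proof is correct, and it is in spirit a reconstruction of the Burstall--Wood argument via second fundamental forms and adjointness. The key steps all check out. (a) Since $\underline{\Image}A_{\phi}'=\upsi$, indeed $A_{\phi,\psi}'=A_{\phi}'$, and this map is holomorphic for the connections ${}^{\phi}\nabla$, ${}^{\psi}\nabla$ (not just for ${}^{\phi}\nabla$, ${}^{\phi^{\bot}}\nabla$), because ${}^{\phi^\bot}\nabla_{\partial/\partial\bar z}(A_{\phi}'s)=A_{\phi}'({}^{\phi}\nabla_{\partial/\partial\bar z}s)$ already lies in $\Gamma(\upsi)$. (b) Your frame computation works: writing $A_{\phi}'e_0=ae_1$, the holomorphicity identity gives $a\,\pi_{\phi^{\bot}}(\partial e_1/\partial\bar z)=(ca-\partial a/\partial\bar z)\,e_1$ with $c$ defined by ${}^{\phi}\nabla_{\partial/\partial\bar z}e_0=ce_0$, so $\pi_{(\uphi\oplus\upsi)^{\bot}}(\partial e_1/\partial\bar z)$ vanishes off the isolated zeros of $a$ and hence, by continuity, everywhere; thus $A_{\psi}''=A_{\psi,\phi}''$ takes values in $\uphi$. (c) Adjointness then makes $A_{\psi}''$ an isomorphism wherever $A_{\phi}'$ is, so its filled-out image is the whole line bundle $\uphi$, giving $G''(G'(\phi))=\phi$; and the standard fact that the adjoint of a holomorphic bundle map between Hermitian bundles with metric connections is antiholomorphic, combined with your observation that $\pi_{\psi^{\bot}}\partial/\partial z$ and $\pi_{\phi}\partial/\partial z$ agree on $\Gamma(\uphi)$ (because $\partial/\partial z$ carries $\Gamma(\uphi)$ into $\Gamma(\uphi\oplus\upsi)$), upgrades antiholomorphicity of $A_{\psi,\phi}''$ to that of $A_{\psi}'':\upsi\rightarrow\upsi^{\bot}$, whence $G'(\phi)$ is harmonic by Lemma \ref{burlem1}(ii). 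The only loose ends are exactly the ones you flag yourself, namely the bookkeeping at the zeros of $A_{\phi}'$; this is unproblematic because holomorphicity of $A_{\phi}'$ makes those zeros isolated and of finite order, so identities valid off the zero set extend by continuity, matching the filling-out procedure of \cite{BurWood}.
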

We shall now construct a harmonic sequence using the above. A map $f:M \rightarrow \mathbb{CP}^n$ is said to be \emph{full} if its image does not lie in a proper projective subspace of $\mathbb{CP}^n.$ Let $f_0:M \rightarrow \mathbb{CP}^n$ be a full holomorphic map, then as above $\uG'(f_0):=\underline{\Image}{A_{f_0}'} \subset \uf_0^{\bot}$ (most authors omit the underlining in the bundle $\uG'(\cdot)$; we add it for clarity) where $G'(f_0):M \rightarrow \mathbb{CP}^n$ is a harmonic map by Lemma \ref{burlem2}. Applying the procedure again to $G'(f_0):M \rightarrow \mathbb{CP}^n$ we have $\uG'(G'(f_0)):=\underline{\Image}{A_{G'(f_0)}'} \subset \uG'(f_0)^{\bot}$ where $G'(G'(f_0)):M \rightarrow \mathbb{CP}^n$ is again a harmonic map by Lemma \ref{burlem2}. 

For ease of notation write, $f_j:=G'(G'(\dots G'(G'(f_0)) \dots))$ where $G'$ is applied $j$ times to $f_0$, and $\uf_j=\uG'(G'(\dots G'(G'(f_0)) \dots))$, so $f_j:M \rightarrow \mathbb{CP}^n$ is a harmonic map and $\uf_j:=f_j^{-1}T$ its associated subbundle given by the pullback of the tautological bundle. Note that $\uf_j:=\underline{\Image}{A_{f_{j-1}}'} \subset \uf_{j-1}^{\bot}$.

\begin{remark}
Similarly, given a full antiholomorphic map $g_0$, replacing $A'$ and $G'$ with $A''$ and $G''$ respectively, we obtain $g_k:=G''(G''(\dots G''(G''(g_0)) \dots))$ where $G''$ is applied $k$ times to $g_0$.
\end{remark}

It was shown in \cite{BurWood} and through a different interpretation in \cite{EellsWood} that the $n$th iteration of the procedure above gives $\uf_n$ where $f_n:M\rightarrow \mathbb{CP}^n$ is a full antiholomorphic map. Using Lemma \ref{burlem1} we see that $A''_{f_0}=A'_{f_n}=0$ since $f_0$ and $f_n$ are holomorphic and antiholomorphic respectively, therefore $\uG''(f_0)=\uG'(f_n)=0$ and so do not define maps into $\mathbb{CP}^n$. Therefore we have the following sequence of associated subbundles of harmonic maps and bundle maps between them, called the \emph{harmonic sequence} \cite{Bolton,Wolfson2}: 
\begin{equation}\label{Adiagram}
\uf_0 \myarrows{A'_{f_0}}{A''_{f_1}} \uf_1 
\myarrows{A'_{f_1}}{A''_{f_2}} 
\dots
\myarrows{A'_{f_{\rho-2}}}{A''_{f_{\rho-1}}} \uf_{\rho-1}
\myarrows{A'_{f_{\rho-1}}}{A''_{f_{\rho}}} \uf_{\rho} 
\myarrows{A'_{f_{\rho}}}{A''_{f_{\rho+1}}} \uf_{\rho+1}
\myarrows{A'_{f_{\rho+1}}}{A''_{f_{\rho+2}}}
\dots
\myarrows{A'_{f_{n-1}}}{A''_{f_{n}}} \uf_{n},
\end{equation}
where $f_0 : M \rightarrow \mathbb{CP}^n$ is a full holomorphic map with associated bundle $\uf_0 :={f_0}^{-1}T$, $f_i : M \rightarrow \mathbb{CP}^n$ is a full harmonic map with associated bundle $\uf_i :={f_i}^{-1}T$ for each $i \in \{1,2,\dots,n-1 \}$, and $f_n : M \rightarrow \mathbb{CP}^n$ is a full antiholomorphic map with associated bundle $\uf_n :={f_n}^{-1}T$. 

Let $(U,z)$ be a chart of $M$ and let $z_0 \in U$ be a zero of $A_{f_{\rho-1}}'$ where $\rho \in \{1, \dots ,n \}$, then we can write
\begin{equation*}\label{Afzero}
A_{f_{\rho-1}}'(z)=(z-z_0)^k\lambda(z)
\end{equation*}
where $\lambda$ is a smooth section of $L(\uf_{\rho-1},\uf_{\rho-1}^{\bot})$, non-zero at $z_0$ and $k \in \mathbb{N}$. Then we say that $f_0$ is \emph{$\rho$th(-order) ramified} at the point $z_0$ with \emph{$\rho$th ramification index} $k$. We call the sum of all ramification indices of the points of $\rho$th ramification the \emph{$\rho$th total ramification index} and denote it $r_{\rho-1}$.
\begin{remark}
All harmonic maps $S^2 \rightarrow \mathbb{CP}^n$ are given as above; for higher genera the construction gives all harmonic maps which are (complex) isotropic \cite{EellsWood}, in the sense that all their Gauss bundles are mutually orthogonal, cf.\ \cite[\S 3]{BurWood}. This is equivalent to the finiteness of the uniton number, see, for example, \cite[\S 4.3]{Aleman}. The terms \emph{of infinite isotropy order}, \emph{strongly isotropic} and \emph{pseudoholomorphic} \cite{BoltonWoodward} are also used. Holomorphic and antiholomorphic maps are complex isotropic, so that the subbundles in (\ref{Adiagram}) are mutually orthogonal.
\end{remark}

\section{Estimates of the Index}
In \cite{EellsWood} an estimate was given for the index of nonholomorphic harmonic maps $\phi:M_g \rightarrow \mathbb{CP}^n$ where $M_g$ is a closed Riemann surface of genus $g$. Throughout this paper we call a map that is holomorphic or antiholomorphic \emph{$\pm$-holomorphic}. 
\begin{proposition}\cite{EellsWood}\label{estimate}
Let $\phi:M_g \rightarrow \mathbb{CP}^n$ be a non-$\pm$-holomorphic harmonic map. Then
$$\Index(\phi)\geq \deg(\phi)(n+1)+n(1-g).$$
\end{proposition}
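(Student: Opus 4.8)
The plan is to recall the variational characterization of the index and to exhibit a lower bound on the dimension of the space of variations that decrease (or fail to increase) the energy. Since $\phi$ is harmonic, the index equals the dimension of the maximal subspace of $\Gamma(\phi^{-1}T^{\mathbb{C}}\mathbb{CP}^n)$ on which the second variation (Jacobi) form is negative definite. The key structural input, noted in the introduction, is that holomorphic vector fields along $\phi$ — that is, holomorphic sections of $\phi^{-1}T^{(1,0)}\mathbb{CP}^n$ — give rise to variations contributing to the index (see \cite[p.\ 258]{EellsWood}). So the strategy is to bound $\Index(\phi)$ from below by (a multiple of) the real dimension of this space of holomorphic sections.

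First I would identify the relevant holomorphic bundle. Using the connection-preserving holomorphic isomorphism (\ref{pullback-iso-equation}), namely $\phi^{-1}T^{(1,0)}\mathbb{CP}^n \cong L(\uphi,\uphi^{\bot})$, the space of holomorphic vector fields along $\phi$ is identified with the space $\Hol(L(\uphi,\uphi^{\bot}))$ of holomorphic sections of the Hom-bundle, taken with its Koszul--Malgrange holomorphic structure. The next step is to compute, or at least bound below, $\dim_{\mathbb{C}}\Hol(L(\uphi,\uphi^{\bot}))$. Here the natural tool is the Riemann--Roch theorem: for a holomorphic line bundle (or the determinant data of the relevant bundle) over $M_g$, the dimension of holomorphic sections is at least $\deg + \mathrm{rank}\cdot(1-g)$, with the obstruction term $h^1$ dropped. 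Writing $\uphi = \phi^{-1}T$, one has $\deg L(\uphi,\uphi^{\bot}) = \deg(\uphi^{\bot}) - \mathrm{rank}(\uphi^{\bot})\deg(\uphi)$, and since $\uphi \oplus \uphi^{\bot}$ is the trivial bundle $M_g\times\mathbb{C}^{n+1}$ of degree $0$, we get $\deg(\uphi^{\bot}) = -\deg(\uphi)$. Recalling that $\deg(\phi)$ is (up to sign conventions) $\deg(\uphi)$ and that $\mathrm{rank}(\uphi^{\bot}) = n$, the degree term assembles into $\deg(\phi)(n+1)$, and the rank-times-$(1-g)$ term contributes $n(1-g)$, matching the claimed bound.

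The main obstacle will be making the identification ``holomorphic vector fields contribute to the index'' fully rigorous and counting their contribution without overcounting. Concretely I must verify two things: that a nonzero holomorphic section of $\phi^{-1}T^{(1,0)}\mathbb{CP}^n$ yields a genuine index-reducing variation (this is where the harmonicity and the sign of the Jacobi operator enter, via \cite{EellsWood}), and that the relevant real dimension is at least the complex dimension computed above — a real holomorphic vector field and its conjugate should be counted so that each complex dimension yields (at least) one independent negative direction. I would therefore state carefully the correspondence between $\dim_{\mathbb{C}}\Hol$ and the contribution to $\Index$, cite the precise Jacobi-field computation of \cite{EellsWood}, and then apply Riemann--Roch with the $h^1 \geq 0$ estimate to drop the cohomological obstruction. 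The remaining degree bookkeeping — relating $\deg(\phi)$, $\deg(\uphi)$, and the triviality of $M_g\times\mathbb{C}^{n+1}$ — is routine once the conventions are fixed, so I would only sketch it.
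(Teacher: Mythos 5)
Your proposal is correct and follows essentially the same route as the paper: it invokes Lemma \ref{index>dim} (holomorphic sections of $\phi^{-1}T^{(1,0)}\mathbb{CP}^n$ bound the index from below), applies Riemann--Roch (Theorem \ref{RiemannRoch}) to that rank-$n$ bundle, and drops the non-negative term $\dim H^1$. The only cosmetic difference is that you compute the Chern class term via the isomorphism $\phi^{-1}T^{(1,0)}\mathbb{CP}^n \cong L(\uphi,\uphi^{\bot})$ and the triviality of $\uphi\oplus\uphi^{\bot}$ (note the paper's convention $\deg(\phi)=-c_1(\uphi)$, which makes your signs come out right), whereas the paper reads off $c_1=\deg(\phi)(n+1)$ directly.
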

Here $\deg(\phi)$ denotes the degree of $\phi$ on second cohomology; this is equal to minus the first Chern class of $\uphi$, see \cite[Lemma 5.1]{BurWood}.
In this section we shall give improvements to this estimate for genus 0, for complex isotropic harmonic maps for genus 1 and in some cases for complex isotropic harmonic maps for higher genus.
\subsection{The Space of Holomorphic Sections and Holomorphic Differentials}
The estimate in Proposition \ref{estimate} was constructed by noting that a holomorphic vector field along $\phi$ gives a smooth variation of $\phi$ that contributes to the index of $\phi$.
\begin{lemma}\cite[p.\ 258]{EellsWood}\label{index>dim}
Let $\phi:M_g \rightarrow \mathbb{CP}^n$ be a non-$\pm$-holomorphic harmonic map then
$$\Index(\phi) \geq \dim H^0(M_g,\phi^{-1}T^{(1,0)}\mathbb{CP}^n)$$
where $H^0(M_g,\phi^{-1}T^{(1,0)}\mathbb{CP}^n)$ is the space of holomorphic sections of\/ $\phi^{-1}T^{(1,0)}\mathbb{CP}^n$ over $M_g$.
\end{lemma}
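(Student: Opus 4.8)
The plan is to exhibit, for each nonzero holomorphic section of $\phi^{-1}T^{(1,0)}\mathbb{CP}^n$, a genuine variation of $\phi$ on which the second variation of the energy is strictly negative, and then to count dimensions. First I would recall the second variation (index) form at a harmonic map $\phi:M_g\to\mathbb{CP}^n$: for a real variation field $V\in\Gamma(\phi^{-1}T\mathbb{CP}^n)$,
$$I(V)=\int_{M_g}\Big(|\nabla^{\phi}V|^2-\big\langle\operatorname{tr}_g R^{\mathbb{CP}^n}(V,d\phi)d\phi,\,V\big\rangle\Big)\,dA,$$
where $\Index(\phi)$ is the maximal dimension of a subspace on which $I$ is negative definite. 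Since $\mathbb{CP}^n$ is K\"{a}hler its complex structure is parallel, so I would complexify the bundle and the form and use the splitting $\phi^{-1}T^{\mathbb{C}}\mathbb{CP}^n=\phi^{-1}T^{(1,0)}\mathbb{CP}^n\oplus\phi^{-1}T^{(0,1)}\mathbb{CP}^n$. To a holomorphic section $W\in H^0(M_g,\phi^{-1}T^{(1,0)}\mathbb{CP}^n)$ I associate the real field $V_W:=W+\overline{W}$; the assignment $W\mapsto V_W$ is linear and injective.

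The heart of the argument is to evaluate $I(V_W)$ when $W$ is holomorphic. Here I would use the Koszul--Malgrange structure, so that holomorphicity of $W$ means $\nabla^{\phi}_{\partial/\partial\bar z}W=0$. A Weitzenb\"{o}ck/Bochner manipulation --- integrating by parts and commuting $\nabla_{\partial/\partial z}$ past $\nabla_{\partial/\partial\bar z}$, which introduces the pulled-back curvature $R^{\mathbb{CP}^n}(\partial\phi,\overline{\partial\phi})$ --- lets me trade the vanishing $\overline{\partial}$-energy for the $\partial$-energy plus a curvature term, and then combine this with the harmonicity of $\phi$ (equivalently, Lemma \ref{burlem1}(ii), that $A'_{\phi}$ is holomorphic). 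The outcome should be that $I(V_W)$ reduces to an integral of a holomorphic bisectional curvature of $\mathbb{CP}^n$ pairing $W$ against $\overline{\partial}\phi\leftrightarrow A''_{\phi}$. Because the Fubini--Study metric has positive holomorphic bisectional curvature, this integrand has a definite sign, giving $I(V_W)\le 0$; and since $\phi$ is non-$\pm$-holomorphic, $A''_{\phi}\not\equiv0$, so the inequality is strict for $W\neq0$.

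Finally I would count dimensions: the holomorphic sections $W$ yield, through $W\mapsto V_W$, a space of real variation fields on which $I$ is negative definite, whence $\Index(\phi)\ge\dim H^0(M_g,\phi^{-1}T^{(1,0)}\mathbb{CP}^n)$. The main obstacle is the curvature computation of the previous step: one must carry out the integration-by-parts bookkeeping cleanly and invoke the precise sign of the bisectional curvature of $\mathbb{CP}^n$. A secondary but genuine point is the dimensional accounting --- keeping track of real versus complex dimensions and of the contribution of the complex-bilinear ($(2,0)$-type) part of the Hessian to $I(V_W)$ --- so as to land on exactly the stated bound; one must also verify that the $V_W$ are admissible variations and that distinct holomorphic sections give independent negative directions.
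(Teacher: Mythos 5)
The paper itself does not prove this lemma; it is quoted from \cite[p.\ 258]{EellsWood}, and your strategy (holomorphic sections give negative directions for the second variation, via the K\"ahler curvature of $\mathbb{CP}^n$) is indeed the argument behind that citation. However, your outline has a genuine gap at the crucial sign step, and it is precisely the point you flagged but dismissed as ``secondary dimensional accounting''. Writing $I_{\mathbb{C}}$ for the complex-bilinear extension of the index form, the real field $V_W=W+\overline{W}$ satisfies
$$I(V_W,V_W)=2\,I_{\mathbb{C}}(W,\overline{W})+2\,\mathrm{Re}\,I_{\mathbb{C}}(W,W).$$
Your Weitzenb\"ock/integration-by-parts computation applies to the \emph{Hermitian} part $I_{\mathbb{C}}(W,\overline{W})$: for holomorphic $W$ this does reduce to minus an integral of the bisectional-curvature pairing of $W$ with $A''_{\phi}$, hence is $\leq 0$. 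But the $(2,0)$-part $\mathrm{Re}\,I_{\mathbb{C}}(W,W)$ does not vanish: the K\"ahler curvature tensor has nonzero components of type $R(W,\overline{A''_{\phi}},\overline{A'_{\phi}},W)$, and for the Fubini--Study metric one finds that $\mathrm{Re}\,I_{\mathbb{C}}(W,W)$ is a nonzero real multiple of $\int_{M_g}\mathrm{Re}\bigl[\langle W,\overline{A'_{\phi}}\rangle\langle W,\overline{A''_{\phi}}\rangle\bigr]$, where $\langle\ ,\ \rangle$ denotes the complex-bilinear pairing. This term couples $W$ to \emph{both} $A'_{\phi}$ and $A''_{\phi}$, has no definite sign, and is not in general dominated by the Hermitian part (e.g.\ where $|A'_{\phi}|$ is large compared with $|A''_{\phi}|$). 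So the claim $I(V_W)\leq 0$, on which your dimension count rests, is not justified for the single real field $V_W$.

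The repair is linear-algebraic, not analytic, and is the standard one. Either (i) work with the Hermitian extension: since a real quadratic form and its Hermitian extension to the complexification have the same index (diagonalise over $\mathbb{R}$, read off the signs over $\mathbb{C}$), it suffices that $I_{\mathbb{C}}(W,\overline{W})<0$ for all nonzero holomorphic $W$, which is exactly what your curvature computation gives; or (ii) keep real variations but use the pair $V_W=W+\overline{W}$ and $V_{iW}=i(W-\overline{W})$, for which the $(2,0)$-parts cancel:
$$I(V_W,V_W)+I(V_{iW},V_{iW})=4\,I_{\mathbb{C}}(W,\overline{W})<0,$$
so any subspace of $\mathrm{span}_{\mathbb{R}}\{V_W,V_{iW}:W\in H^0\}$ on which $I\geq 0$ meets its image under $W\mapsto iW$ only in $0$, forcing $\Index(\phi)\geq\dim_{\mathbb{C}}H^0(M_g,\phi^{-1}T^{(1,0)}\mathbb{CP}^n)$. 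In either version, strict negativity also requires the observation that $W$ (holomorphic) and $A''_{\phi}$ (antiholomorphic by harmonicity, and $\not\equiv 0$ since $\phi$ is not holomorphic) have isolated zeros, so the curvature integrand is positive on a dense open set. With these corrections your outline becomes a complete proof.
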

\begin{theorem}[Riemann--Roch \cite{Gunning}]\label{RiemannRoch}
Let $W \rightarrow M_g$ be a holomorphic vector bundle of rank $n$ over Riemann surface $M_g$ of genus $g$ then
 $$\dim_{\mathbb{C}}H^0(M_g,W)-\dim_{\mathbb{C}}H^1(M_g,W)=c_1(\wedge^n W)+n(1-g)$$
 where $c_1$ is the first Chern class (evaluated on the canonical generator of $H_2(M_g,\mathbb{Z})$) and $H^k(M_g,W)$ for $k=0,1$ are the \v{C}ech cohomology groups of the holomorphic bundle $W$.
\end{theorem}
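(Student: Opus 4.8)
The plan is to read the left-hand side as the Euler characteristic $\chi(W) := \dim_{\mathbb{C}} H^0(M_g,W) - \dim_{\mathbb{C}} H^1(M_g,W)$ and to prove the formula by induction on the rank $n$, reducing the rank-$n$ case to the classical Riemann--Roch theorem for \emph{line} bundles on $M_g$, which I take as the base case $n=1$: for a holomorphic line bundle $L$ one has $\chi(L) = c_1(L) + (1-g) = \deg L + 1 - g$. The two structural facts that drive the induction are the additivity of $\chi$ and of $c_1(\det(\cdot))$ along a short exact sequence of holomorphic bundles.

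For the inductive step I would first produce a holomorphic line subbundle $L \hookrightarrow W$. Fixing a positive line bundle $\mathcal{O}(1)$ on $M_g$, for $N$ large the twist $W \otimes \mathcal{O}(N)$ carries a nonzero holomorphic section (its degree, hence its Euler characteristic, grows with $N$ while $H^1$ eventually vanishes); such a section is a sheaf injection $\mathcal{O}(-N) \hookrightarrow W$, and the saturation of its image is a holomorphic line subbundle $L \subset W$ with locally free quotient $Q := W/L$ of rank $n-1$. This yields the short exact sequence
$$0 \to L \to W \to Q \to 0.$$
Since $M_g$ is a compact Riemann surface, $H^k$ vanishes for $k \geq 2$, so the associated long exact sequence in cohomology is
$$0 \to H^0(L) \to H^0(W) \to H^0(Q) \to H^1(L) \to H^1(W) \to H^1(Q) \to 0,$$
and the vanishing of the alternating sum of dimensions for an exact sequence gives $\chi(W) = \chi(L) + \chi(Q)$. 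Independently, the determinant is multiplicative, $\det W \cong L \otimes \det Q$, so $c_1(\wedge^n W) = c_1(L) + c_1(\wedge^{n-1} Q)$. Combining these with the base case $\chi(L) = c_1(L) + (1-g)$ and the induction hypothesis $\chi(Q) = c_1(\wedge^{n-1} Q) + (n-1)(1-g)$ gives
$$\chi(W) = c_1(L) + c_1(\wedge^{n-1}Q) + n(1-g) = c_1(\wedge^n W) + n(1-g),$$
as required.

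I expect the genuine content — as opposed to the bookkeeping — to sit in the base case together with the existence of the line subbundle. The scalar Riemann--Roch theorem is the substantive classical input and I would cite it (as in \cite{Gunning}); the existence of a holomorphic line subbundle itself rests on the scalar theory, which forces $H^0(W \otimes \mathcal{O}(N)) \neq 0$ for $N \gg 0$. As a cross-check on the Chern-class normalisation I would record the equivalent analytic statement: $\chi(W)$ is the index of the Dolbeault operator $\bar\partial_W$ (with $H^1 \cong \operatorname{coker}\bar\partial_W$ by Dolbeault's theorem), whose index on a curve equals $c_1(W) + \tfrac{1}{2}\,\mathrm{rank}(W)\,c_1(TM_g)$; since $\tfrac{1}{2}c_1(TM_g)[M_g] = 1-g$, this recovers exactly the term $n(1-g)$ and the coefficient on $c_1(\wedge^n W)$.
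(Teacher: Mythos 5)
A point of calibration first: the paper does not prove this statement at all --- it is quoted as the classical Riemann--Roch theorem with a citation to \cite{Gunning} and used as a black box, so there is no internal proof to compare against. Your argument should therefore be measured against the standard literature, and your route --- read the left-hand side as the Euler characteristic $\chi(W)$, produce a holomorphic line subbundle $L \subset W$ with locally free quotient $Q$, and induct on the rank using additivity of $\chi$ and of $c_1(\det(\cdot))$ along the short exact sequence $0 \to L \to W \to Q \to 0$ --- is essentially the textbook proof (indeed close to the one in \cite{Gunning}, where bundles over a compact Riemann surface are exhibited as iterated extensions of line bundles). The bookkeeping is all correct: the six-term long exact sequence terminates because $H^2$ vanishes on a curve, the alternating sum of dimensions gives $\chi(W)=\chi(L)+\chi(Q)$, $\det W \cong L \otimes \det Q$ gives the Chern class identity, and the base case is the cited scalar theorem.

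The one step you should repair is the existence of a nonzero holomorphic section of $W \otimes \mathcal{O}(N)$: as written, your justification is circular. You argue that $\chi(W \otimes \mathcal{O}(N))$ grows with $N$ because its \emph{degree} does --- but the implication from degree to Euler characteristic is exactly the Riemann--Roch formula you are trying to prove. The standard non-circular substitute: take $\mathcal{O}(1)=\mathcal{O}(p)$ for a point $p$ and use the exact sequences $0 \to W((k-1)p) \to W(kp) \to S_k \to 0$, where $S_k$ is a skyscraper sheaf with $\dim H^0(S_k)=n$ and $H^1(S_k)=0$; additivity of $\chi$ along these sequences (which needs only finite-dimensionality of cohomology, not Riemann--Roch) gives $\chi(W(kp))=\chi(W)+kn$, hence $\chi(W(kp))>0$ for $k \gg 0$, and since $\dim H^0 \geq \chi$ this already forces $H^0(W(kp)) \neq 0$. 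Note in particular that you never need your claim that $H^1$ eventually vanishes, which would itself require a separate argument. With this substitution the induction is sound, and your closing Dolbeault-index cross-check is a fine consistency test of the normalisation, though it carries no logical weight in the proof.
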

Note that $H^0(M_g,W)$ is the space of holomorphic sections of $W$ over $M_g$ and so $H^1(M_g,W)$ is (by Serre duality) the space of holomorphic $(1,0)$-forms of $M_g$ with values in the dual, $W^*$, of $W$ \cite[Theorem 9]{Gunning}.
\begin{corollary}\label{RiemannRoch-cor}
Let $W \rightarrow M_g$ be a complex vector bundle over a Riemann surface $M_g$ which can be given more than one distinct holomorphic structure (i.e. the structure of a holomorphic vector bundle which gives the complex structure), then $\dim H^0(M_g,W)-\dim H^1(M_g,W)$ is independent of the choice of holomorphic structure.
\end{corollary}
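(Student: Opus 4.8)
The plan is to read off the result directly from the Riemann--Roch theorem (Theorem \ref{RiemannRoch}) by isolating which quantities on each side of the equality are sensitive to the holomorphic structure. First I would fix the underlying complex vector bundle $W \to M_g$ of rank $n$ and suppose it is equipped with two (a priori distinct) holomorphic structures, say giving holomorphic bundles $W_1$ and $W_2$ whose underlying complex bundle is $W$ in both cases. Applying Theorem \ref{RiemannRoch} to each yields
\[
\dim_{\mathbb{C}} H^0(M_g,W_i) - \dim_{\mathbb{C}} H^1(M_g,W_i) = c_1(\wedge^n W_i) + n(1-g), \qquad i = 1,2.
\]

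The heart of the argument is then to observe that the entire right-hand side is a topological invariant of the data $(W, M_g)$ and so takes the same value for $i=1$ and $i=2$. The rank $n$ and the genus $g$ are determined by the underlying complex bundle and the surface, independently of any holomorphic refinement. The remaining term is $c_1(\wedge^n W_i) = c_1(W_i)$, the first Chern class of the determinant bundle; and the first Chern class is an invariant of the \emph{complex} vector bundle alone, not of the holomorphic structure placed upon it. Concretely, one may compute $c_1(W)$ by Chern--Weil theory from the curvature of any connection compatible with the complex structure: the resulting de Rham class is independent of the choice of connection, hence in particular does not change when the connection happens to be the Chern connection of one holomorphic structure rather than another. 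Thus $c_1(\wedge^n W_1) = c_1(\wedge^n W_2)$, and the two right-hand sides agree.

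Equating the left-hand sides then gives $\dim H^0(M_g,W_1) - \dim H^1(M_g,W_1) = \dim H^0(M_g,W_2) - \dim H^1(M_g,W_2)$, which is exactly the asserted independence. I do not expect a genuine obstacle here, since the statement is essentially a repackaging of Riemann--Roch; the only point requiring care is the justification that $c_1$ ignores the holomorphic structure, which is the standard fact that Chern classes are topological invariants of complex vector bundles. Note also that the individual dimensions $\dim H^0$ and $\dim H^1$ can and generally do depend on the holomorphic structure, so it is only their difference (the holomorphic Euler characteristic) that is invariant.
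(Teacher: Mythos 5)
Your proposal is correct and is essentially the same argument as the paper's: apply Theorem \ref{RiemannRoch} and note that the right-hand side $c_1(\wedge^n W)+n(1-g)$ depends only on the underlying complex bundle, not on the holomorphic structure. The paper states this in one line; your additional Chern--Weil justification that $c_1$ is a topological invariant simply makes explicit what the paper takes as known.
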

\begin{proof}
By Theorem \ref{RiemannRoch} we have $\dim H^0(M_g,W)-\dim H^1(M_g,W)=c_1(\wedge^n W)+n(1-g)$, where the right-hand side is only dependent on the complex structure.
\end{proof}
Let $\phi:M_g \rightarrow \mathbb{CP}^n$ be a non-$\pm$-holomorphic harmonic map. Then using Riemann--Roch for the holomorphic vector bundle $\phi^{-1}T^{(1,0)}\mathbb{CP}^n \rightarrow M_g$ of rank $n$ we get
$$\dim_{\mathbb{C}}H^0(M_g,\phi^{-1}T^{(1,0)}\mathbb{CP}^n)-\dim_{\mathbb{C}}H^1(M_g,\phi^{-1}T^{(1,0)}\mathbb{CP}^n)=\deg(\phi)(n+1)+n(1-g)$$
and Proposition \ref{estimate} follows directly by disregarding the non-negative number \\ $\dim_{\mathbb{C}}H^1(M_g,\phi^{-1}T^{(1,0)}\mathbb{CP}^n)$. We improve the estimate in Proposition \ref{estimate} by looking at $\dim_{\mathbb{C}}H^0(M_g,\phi^{-1}T^{(1,0)}\mathbb{CP}^n)$ more closely and finding an improved estimate for its dimension by using the connection-preserving isomorphism (\ref{pullback-iso-equation}).

Considering the harmonic sequence (\ref{Adiagram}) above, we say a full harmonic map $\phi:M \rightarrow \mathbb{CP}^n$ has $directrix$ $(f,\rho)$ if $\phi=G'(G'(\dots G'(G'(f)) \dots))$ where $G'$ is applied $\rho$ times to a full holomorphic map $f:M \rightarrow \mathbb{CP}^n$ and $\rho \in \{0,1,\dots,n \}$ \cite{BurWood, EellsWood}. Given a harmonic map $\phi:M \rightarrow \mathbb{CP}^n$ with directrix $(f,\rho)$ then by (\ref{pullback-iso-equation}) we have the following decomposition into complex vector bundles:
\begin{align*}
\phi^{-1}T^{(1,0)}\mathbb{CP}^n &\cong L(\uf_{\rho},\uf_{\rho}^{\bot})=L(\uf_{\rho},\uf_{0} \oplus \uf_{1} \oplus \dots \oplus \uf_{\rho-1} \oplus \uf_{\rho+1} \oplus \dots \oplus \uf_{n}) \\   
&\cong L(\uf_{\rho},\uf_{0}) \oplus \dots \oplus L(\uf_{\rho},\uf_{\rho-1}) \oplus L(\uf_{\rho},\uf_{\rho+1}) \oplus \dots \oplus L(\uf_{\rho},\uf_{n})\\
&= A_- + A_+,
\end{align*}
where
$$A_{-}=\sum_{j=0}^{\rho-1} L(\uf_{\rho},\uf_{j}), \qquad A_{+}=\sum_{j=\rho+1}^{n} L(\uf_{\rho},\uf_{j}).$$
As usual, we give $L(\uf_{\rho},\uf_{\rho}^{\bot})$ the connection $^L\nabla$ induced by those on $\uf_{\rho}$ and $\uf_{\rho}^{\bot}$ given by
\begin{equation}\label{L-connection-equ}
( ^{L}\nabla_{\frac{\partial}{\partial \bar{z}}} u)(s)= {^{f_{\rho}^{\bot}}\nabla_{\frac{\partial}{\partial \bar{z}}}} u(s)-u( {}^{f_{\rho}}\nabla_{\frac{\partial}{\partial \bar{z}}} s),
\end{equation}
where ${}^{f_{\rho}^{\bot}}\nabla$ and ${}^{f_{\rho}}\nabla$ are the connections defined in (\ref{connection-subbundles-def}) on $\uf_{\rho}^{\bot}$ and $\uf_{\rho}$ respectively, $u \in \Gamma( L(\uf_{\rho},\uf_{\rho}^{\bot}))$ and $s \in \Gamma(\uf_{\rho})$.
We then give $L(\uf_{\rho},\uf_{\rho}^{\bot})$ the Koszul--Malgrange holomorphic structure from the $(0,1)$-part of that connection $^L\nabla$.
\begin{lemma}\label{A-holo-subbundles-lemma}
Let $L(\uf_{\rho},\uf_{\rho}^{\bot})$ be the holomorphic vector bundle over Riemann surface $M_g$ defined above, then $A_+$ and $A_-$ are both holomorphic subbundles of $L(\uf_{\rho},\uf_{\rho}^{\bot})$. 
\end{lemma}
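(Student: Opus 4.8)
The plan is to prove the statement directly by showing that each of $A_+$ and $A_-$ is closed under the Koszul--Malgrange $\bar\partial$-operator of $L(\uf_\rho,\uf_\rho^\bot)$, that is, under the $(0,1)$-part ${}^L\nabla_{\partial/\partial\bar z}$ of the connection ${}^L\nabla$ from (\ref{L-connection-equ}); this is precisely the condition for a smooth subbundle to be a holomorphic subbundle. Since $A_+$ and $A_-$ are visibly smooth subbundles (finite direct sums of the smooth bundles $L(\uf_\rho,\uf_j)$), it suffices to compute ${}^L\nabla_{\partial/\partial\bar z}$ on a section $u\in\Gamma(L(\uf_\rho,\uf_m))$ for a single index $m\neq\rho$ and to read off in which summands $L(\uf_\rho,\uf_k)$ its image can land.

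The key preliminary step, and where complex isotropy enters, is to record the \emph{tridiagonal} structure of the harmonic sequence (\ref{Adiagram}). By isotropy the line subbundles $\uf_0,\dots,\uf_n$ are mutually orthogonal and span $M\times\mathbb{C}^{n+1}$, so $\uf_\rho^\bot=\bigoplus_{j\neq\rho}\uf_j$. First I would use the defining property $\uf_{j+1}=\underline{\Image}\,A'_{f_j}$ of the sequence: since $A'_{f_j}=\pi_{\uf_j^\bot}\,\partial/\partial z|_{\uf_j}$ has image exactly $\uf_{j+1}$, the $\uf_j^\bot$-component of $\partial/\partial z$ of a section of $\uf_j$ lies entirely in $\uf_{j+1}$, so $A'_{f_j,f_k}=\pi_{f_k}\,\partial/\partial z|_{\uf_j}=0$ for all $k\notin\{j,j+1\}$. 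Dually, the adjointness relation $\langle A'_{\phi,\psi}\upsilon,w\rangle=-\langle\upsilon,A''_{\psi,\phi}w\rangle$ applied with $\phi=f_k$, $\psi=f_j$ gives $A''_{f_j,f_k}=-(A'_{f_k,f_j})^*$; since $A'_{f_k,f_j}=0$ unless $j\in\{k,k+1\}$, it follows that $\pi_{f_k}\,\partial/\partial\bar z|_{\uf_j}=A''_{f_j,f_k}$ vanishes unless $k\in\{j-1,j\}$. In words: $\partial/\partial\bar z$ sends a section of $\uf_j$ into $\uf_{j-1}\oplus\uf_j$ only.

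With this in hand the computation is routine bookkeeping. For $u\in\Gamma(L(\uf_\rho,\uf_m))$ and $s\in\Gamma(\uf_\rho)$, formula (\ref{L-connection-equ}) reads
$$({}^L\nabla_{\partial/\partial\bar z}u)(s)={}^{f_\rho^\bot}\nabla_{\partial/\partial\bar z}\big(u(s)\big)-u\big({}^{f_\rho}\nabla_{\partial/\partial\bar z}s\big).$$
The second term lies in $\uf_m$, since ${}^{f_\rho}\nabla_{\partial/\partial\bar z}s\in\Gamma(\uf_\rho)$ and $u$ maps into $\uf_m$. For the first term, ${}^{f_\rho^\bot}\nabla_{\partial/\partial\bar z}(u(s))=\pi_{\uf_\rho^\bot}\frac{\partial}{\partial\bar z}(u(s))$ with $u(s)\in\Gamma(\uf_m)$; by the tridiagonal structure $\frac{\partial}{\partial\bar z}(u(s))\in\Gamma(\uf_{m-1}\oplus\uf_m)$, and projecting onto $\uf_\rho^\bot$ discards any $\uf_\rho$-component, so the first term lies in $\uf_m\oplus\uf_{m-1}$ (with the $\uf_{m-1}$ summand absent when $m-1=\rho$, or when $m=0$). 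Hence ${}^L\nabla_{\partial/\partial\bar z}u\in\Gamma\big(L(\uf_\rho,\uf_m)\oplus L(\uf_\rho,\uf_{m-1})\big)$.

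Finally I would conclude by tracking indices. If $m>\rho$ then $m-1\geq\rho$, so the surviving image summands have indices $m>\rho$ and either $m-1>\rho$ (still inside $A_+$) or $m-1=\rho$ (killed by the projection); thus ${}^L\nabla_{\partial/\partial\bar z}$ preserves $A_+$. If $m<\rho$ then $m-1<\rho$ as well, so both image summands lie in $A_-$; thus ${}^L\nabla_{\partial/\partial\bar z}$ preserves $A_-$. This shows that $A_+$ and $A_-$ are holomorphic subbundles of $L(\uf_\rho,\uf_\rho^\bot)$. I expect the only genuine obstacle to be establishing the tridiagonal structure cleanly --- in particular fixing the direction of the off-diagonal $\partial/\partial\bar z$-term via the adjointness of $A'$ and $A''$ --- because once that is in place the statement reduces to the elementary observation that $\bar\partial$ shifts the $\uf$-index downward by at most one, which can never carry an index from the range $>\rho$ into the range $<\rho$ or vice versa.
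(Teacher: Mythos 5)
Your proof is correct and follows essentially the same route as the paper's: both show $A_+$ and $A_-$ are closed under ${}^L\nabla_{\partial/\partial\bar z}$ by splitting formula (\ref{L-connection-equ}) into its two terms and using the fact that $\partial/\partial\bar z$ carries sections of $\uf_j$ into $\uf_j\oplus\uf_{j-1}$, so the index can never cross from one side of $\rho$ to the other. The only difference is that you derive this index-shifting property explicitly from $\uf_{j+1}=\underline{\Image}\,A'_{f_j}$ and the adjointness of $A'$ and $A''$, whereas the paper simply cites it as a known consequence of the harmonic sequence (\ref{Adiagram}); this is a welcome extra detail, not a different argument.
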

\begin{proof}
It suffices to show that both $A_+$ and $A_-$ are closed under $ ^L\nabla_{\frac{\partial}{\partial \bar{z}}}$. To show this we let $u \in \Gamma(A_+)$ and $s \in \Gamma(\uf_{\rho})$, then by (\ref{L-connection-equ}), 
$$( ^{L}\nabla_{\frac{\partial}{\partial \bar{z}}} u)(s)= {^{f_{\rho}^{\bot}}\nabla_{\frac{\partial}{\partial \bar{z}}}} u(s)-u( ^{f_{\rho}}\nabla_{\frac{\partial}{\partial \bar{z}}} s).$$
As $u(s) \in \Gamma (\sum_{j=\rho+1}^{n} \uf_{j})$ then $(\partial / \partial\bar{z})u(s) \in \Gamma (\sum_{j=\rho}^{n} \uf_{j})$ by (\ref{Adiagram}), indeed $\partial/\partial\bar{z}$ takes sections of $\uf_i$ to sections of $\uf_i \oplus \uf_{i-1}$ for all $i$.
Therefore we have 
$${^{f_{\rho}^{\bot}}\nabla_{\frac{\partial}{\partial \bar{z}}}} u(s)=\pi_{f_{\rho}^{\bot}}\frac{\partial}{\partial \bar{z}}u(s) \in \Gamma(\sum_{j=\rho+1}^{n} \uf_{j}).$$
Also, as $^{f_{\rho}}\nabla_{\frac{\partial}{\partial \bar{z}}} s \in \Gamma(\uf_{\rho})$ and since $u \in \Gamma(A_+)=\Gamma(\sum_{j=\rho+1}^{n} L(\uf_{\rho},\uf_{j}))$ then $u( {}^{f_{\rho}}\nabla_{\frac{\partial}{\partial \bar{z}}} s) \in \Gamma(\sum_{j=\rho+1}^{n} \uf_{j})$ as required.
A similar argument can be made for $A_-$.
\end{proof}
For each $j \in \{\rho+1,\dots,n\}$, $L(\uf_{\rho},\uf_{j})$ is a complex subbundle of $L(\uf_{\rho},\uf_{\rho}^{\bot})$ and so can be given the induced (subbundle) holomorphic structure, i.e. that with $\bar{\partial}$-operator given by $\pi_{f_{j}} {^{L}\nabla_{\frac{\partial}{\partial \bar{z}}}}$. Using this we can give $A_+$ a second `direct sum' holomorphic structure $\bar{\partial}_{\text{sum}}$ defined by
\begin{equation}\label{d_sum-holo-structure-equation}
\bar{\partial}_{\text{sum}}(\sigma) = \sum_{j=\rho+1}^{n}\pi_{f_{j}} {^{L}\nabla_{\frac{\partial}{\partial \bar{z}}}}(\sigma_j)
\end{equation}
for $\sigma=\sigma_{\rho+1}+\sigma_{\rho+2}+ \cdots +\sigma_{n} \in \Gamma (A_+)$ and $\sigma_j \in \Gamma(L(\uf_{\rho},\uf_{j}))$ for all $j \in \{\rho+1,\dots,n\}$.
\begin{lemma}\label{holo-sum-lemma}
Let $A_+$ be the holomorphic bundle over a compact Riemann surface $M_g$ defined above equipped with the holomorphic structure $\bar{\partial}_{\text{sum}}$ and let the complex subbundle $L(\uf_{\rho}, \uf_{j})$ of $L(\uf_{\rho},\uf_{\rho}^{\bot})$ be equipped with the induced (subbundle) holomorphic structure as above. Then
\begin{enumerate}[label={\upshape(\roman*)}]
\item $H^0(M_g, A_+)=\sum_{j=\rho+1}^{n} H^0(M_g,L(\uf_{\rho}, \uf_{j})),$
\item $H^1(M_g, A_+)=\sum_{j=\rho+1}^{n} H^1(M_g,L(\uf_{\rho}, \uf_{j})).$
\end{enumerate}
\end{lemma}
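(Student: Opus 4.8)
The plan is to recognise that, by its very definition~\eqref{d_sum-holo-structure-equation}, the operator $\bar{\partial}_{\text{sum}}$ is \emph{block diagonal} with respect to the smooth orthogonal direct sum decomposition $A_+ = \bigoplus_{j=\rho+1}^{n} L(\uf_\rho,\uf_j)$, the summands being mutually orthogonal because the $\uf_j$ are mutually orthogonal (cf.~\eqref{Adiagram}). Concretely, a section $\sigma_j \in \Gamma(L(\uf_\rho,\uf_j))$ is sent to $\pi_{f_j}{}^L\nabla_{\partial/\partial\bar z}(\sigma_j)$, which again lies in $L(\uf_\rho,\uf_j)$ and is precisely the induced (subbundle) $\bar\partial$-operator on that factor. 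First I would verify that $\bar{\partial}_{\text{sum}}$ is a genuine Koszul--Malgrange operator: being a finite sum of the induced $\bar\partial$-operators acting on complementary summands, it inherits the Leibniz rule, so $(A_+,\bar{\partial}_{\text{sum}})$ is a holomorphic bundle and is exactly the holomorphic direct sum of the factors $\bigl(L(\uf_\rho,\uf_j),\ \text{induced}\bigr)$.

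Given this, part (i) is immediate: a section $\sigma=\sum_{j=\rho+1}^{n}\sigma_j$ satisfies $\bar{\partial}_{\text{sum}}(\sigma)=\sum_{j}\pi_{f_j}{}^L\nabla_{\partial/\partial\bar z}(\sigma_j)=0$, and since the terms lie in the mutually orthogonal --- hence fibrewise linearly independent --- summands $L(\uf_\rho,\uf_j)$, the sum vanishes if and only if each term does. Thus $\sigma$ is $\bar{\partial}_{\text{sum}}$-holomorphic precisely when each $\sigma_j$ is holomorphic for the induced structure, whence $H^0(M_g,A_+)=\bigoplus_{j=\rho+1}^{n}H^0(M_g,L(\uf_\rho,\uf_j))$.

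For part (ii) I would pass to the Dolbeault description, writing $H^1(M_g,A_+)$ as the cokernel of $\bar{\partial}_{\text{sum}}\colon\Omega^0(A_+)\to\Omega^{0,1}(A_+)$; as both spaces split along the factors and $\bar{\partial}_{\text{sum}}$ is block diagonal, this cokernel is the direct sum of the cokernels of the individual induced operators, which gives (ii). Alternatively, one can reduce to part (i) by Serre duality (cf.\ the discussion after Theorem~\ref{RiemannRoch}), using $H^1(M_g,W)\cong H^0(M_g,W^*\otimes K)^*$ together with the holomorphic splitting $A_+^*=\bigoplus_{j}L(\uf_\rho,\uf_j)^*$ and repeating the argument of part (i).

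I do not anticipate a serious obstacle: the lemma is in essence the statement that sheaf cohomology commutes with finite direct sums of holomorphic bundles, and all of its content is packed into the observation that $\bar{\partial}_{\text{sum}}$ was defined so as to be block diagonal. The one point deserving care is part (ii), where $H^1$ is a cokernel rather than a kernel, so the additivity is not quite as transparent as for $H^0$ and must be seen through the Dolbeault (or Serre-duality) description; once that is set up, both identities follow at once.
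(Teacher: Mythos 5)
Your proposal is correct and takes essentially the same approach as the paper: part (i) is exactly the paper's argument, namely that $\bar{\partial}_{\text{sum}}$ is block diagonal with respect to the decomposition $A_+=\sum_{j=\rho+1}^{n}L(\uf_{\rho},\uf_{j})$, so a section is holomorphic if and only if each component is. For part (ii) the paper uses precisely the alternative you mention --- Serre duality combined with part (i) --- while your primary Dolbeault-cokernel argument is an equally valid minor variant of the same idea.
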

\begin{proof}
\begin{enumerate}[label={\upshape(\roman*)}]
\item Let $\sigma \in \Gamma (A_+)$ then $\sigma$ may be decomposed uniquely as $\sigma=\sigma_{\rho+1}+\sigma_{\rho+2}+ \cdots +\sigma_{n}$ for $\sigma_j \in \Gamma(L(\uf_{\rho},\uf_{j}))$ for all $j \in \{\rho+1,\dots,n\}$. As $\bar{\partial}_{\text{sum}}\vert_{L(\uf_{\rho},\uf_{j})}=\pi_{f_{j}} {^{L}\nabla_{\frac{\partial}{\partial \bar{z}}}}$ then for each $j$, $L(\uf_{\rho},\uf_{j})$ is closed under $\bar{\partial}_{\text{sum}}$ i.e. is a holomorphic subbundle of $A_+$. We have 
\begin{align*}
\bar{\partial}_{\text{sum}}(\sigma) &=\bar{\partial}_{\text{sum}}(\sigma_{\rho+1})+\bar{\partial}_{\text{sum}}(\sigma_{\rho+2})+\cdots+\bar{\partial}_{\text{sum}}(\sigma_{n}) \\
&=\pi_{f_{\rho+1}} {^{L}\nabla_{\frac{\partial}{\partial \bar{z}}}}(\sigma_{\rho+1})+\pi_{f_{\rho+2}} {^{L}\nabla_{\frac{\partial}{\partial \bar{z}}}}(\sigma_{\rho+2})+\cdots+\pi_{f_{n}} {^{L}\nabla_{\frac{\partial}{\partial \bar{z}}}}(\sigma_{n}).
\end{align*}
Therefore $\sigma \in H^0(M_g,A_+)$ if and only if $\sigma_j \in H^0(M_g,L(\uf_{\rho},\uf_{j}))$ for each $j \in \{\rho+1,\dots,n\}$.
\item Using Serre duality \cite[Theorem 9]{Gunning} we have $H^1(M_g, A_+) \cong H^0(M_g, A_+^* \otimes T^*M_g),$ then (ii) follows from (i). 
\end{enumerate}
\end{proof}
\begin{proposition}\label{dimH^0 geq dimH^0-dimH^1-prop}
Let $M_g$ be a compact Riemann surface of genus $g$, $\phi: M_g \rightarrow \mathbb{CP}^n$ a harmonic map with directrix $(f, \rho)$, let $A_{+}=\sum_{j=\rho+1}^{n} L(\uf_{\rho},\uf_{j})$ be equipped with the holomorphic structure $\bar{\partial}_{\text{sum}}$ and let the complex subbundles $L(\uf_{\rho}, \uf_{j})$ for $j=\rho+1,\dots,n$ of $L(\uf_{\rho},\uf_{\rho}^{\bot})$ be each equipped with the induced (subbundle) holomorphic structure. Then 
$$\dim H^0(M_g,\phi^{-1}T^{(1,0)}\mathbb{CP}^n) \geq \sum_{j=\rho+1}^n \dim H^0(M_g, L(\uf_{\rho},\uf_{j}))-\dim H^1(M_g, L(\uf_{\rho},\uf_{j})).$$
\end{proposition}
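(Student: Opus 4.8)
The plan is to exploit the fact that the complex bundle $A_+$ carries two a priori different holomorphic structures, together with the observation that the quantity $\dim H^0-\dim H^1$ is insensitive to which structure we use. On the one hand, Lemma \ref{A-holo-subbundles-lemma} shows that $A_+$ is a genuine holomorphic subbundle of $L(\uf_\rho,\uf_\rho^\bot)$, so it inherits the restriction of ${}^{L}\nabla_{\partial/\partial\bar z}$; call this the \emph{subbundle} structure. On the other hand $A_+$ carries the direct-sum structure $\bar\partial_{\text{sum}}$ of (\ref{d_sum-holo-structure-equation}). These two structures genuinely differ (the subbundle $\bar\partial$ retains the off-diagonal components $\pi_{f_{j-1}}{}^{L}\nabla_{\partial/\partial\bar z}(\sigma_j)$ that $\bar\partial_{\text{sum}}$ discards, since by the harmonic sequence $\partial/\partial\bar z$ lowers the index), but they are holomorphic structures on the \emph{same} underlying complex bundle, so Corollary \ref{RiemannRoch-cor} applies to equate their Euler characteristics.

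With this in hand I would assemble a chain of (in)equalities. First, the connection-preserving isomorphism (\ref{pullback-iso-equation}) identifies $\phi^{-1}T^{(1,0)}\mathbb{CP}^n$ with $L(\uf_\rho,\uf_\rho^\bot)$ as holomorphic bundles, so their spaces of holomorphic sections have equal dimension. Next, because $A_+$ with its subbundle structure is a holomorphic subbundle of $L(\uf_\rho,\uf_\rho^\bot)$, a section of $A_+$ is $\bar\partial$-holomorphic in $A_+$ precisely when it is $\bar\partial$-holomorphic in the ambient bundle; hence holomorphic sections of $A_+$ inject into those of $L(\uf_\rho,\uf_\rho^\bot)$, giving
$$\dim H^0(M_g,\phi^{-1}T^{(1,0)}\mathbb{CP}^n)\ \geq\ \dim H^0_{\text{sub}}(M_g,A_+),$$
where the subscript records the subbundle structure.

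To finish I would discard the non-negative term $\dim H^1_{\text{sub}}(M_g,A_+)$ to obtain $\dim H^0_{\text{sub}}(M_g,A_+)\geq \dim H^0_{\text{sub}}(M_g,A_+)-\dim H^1_{\text{sub}}(M_g,A_+)$, then invoke Corollary \ref{RiemannRoch-cor} to replace this right-hand Euler characteristic by the one computed with $\bar\partial_{\text{sum}}$, and finally apply Lemma \ref{holo-sum-lemma}(i) and (ii) to split the $H^0$ and $H^1$ of $(A_+,\bar\partial_{\text{sum}})$ into the direct sums over $j=\rho+1,\dots,n$ of the corresponding groups of $L(\uf_\rho,\uf_j)$. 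Reading off the result yields exactly $\sum_{j=\rho+1}^n \dim H^0(M_g,L(\uf_\rho,\uf_j))-\dim H^1(M_g,L(\uf_\rho,\uf_j))$, as required.

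The substantive point — and the step I would be most careful about — is the interplay between the two holomorphic structures: the subbundle structure is what yields the inclusion into the ambient bundle (hence the lower bound on $\dim H^0$ of the full pullback), whereas only the direct-sum structure splits compatibly into the individual homomorphism-bundle summands through Lemma \ref{holo-sum-lemma}. The bridge between them is the structure-independence of $\dim H^0-\dim H^1$ from Corollary \ref{RiemannRoch-cor}; everything else is the formal fact that passing to a holomorphic subbundle can only shrink the space of holomorphic sections and that $\dim H^1\geq 0$. No genuinely hard estimate is needed, but one must resist conflating the subbundle and direct-sum structures, since $\dim H^0$ and $\dim H^1$ individually do depend on the choice.
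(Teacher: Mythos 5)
Your proposal is correct and follows essentially the same route as the paper's own proof: the holomorphic identification (\ref{pullback-iso-equation}), the lower bound on $\dim H^0$ coming from $A_+$ as a holomorphic subbundle (Lemma \ref{A-holo-subbundles-lemma}), discarding the non-negative $\dim H^1$, invoking Corollary \ref{RiemannRoch-cor} to swap the subbundle structure for $\bar{\partial}_{\text{sum}}$, and finally splitting via Lemma \ref{holo-sum-lemma}. The only cosmetic difference is that the paper first records the equality $\dim H^0(M_g,\phi^{-1}T^{(1,0)}\mathbb{CP}^n)=\dim H^0(M_g,A_+)+\dim H^0(M_g,A_-)$ and then drops the $A_-$ term, whereas you use only the injection of holomorphic sections of $A_+$ into those of the ambient bundle; both give the same inequality, and your closing caution about not conflating the two holomorphic structures is precisely the crux of the argument.
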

\begin{proof}
Recall from (\ref{pullback-iso-equation}) that $\phi^{-1}T^{(1,0)}\mathbb{CP}^n \cong L(\uf_{\rho},\uf_{\rho}^{\bot})$ is an isomorphism of holomorphic vector bundles where $\phi^{-1}T^{(1,0)}\mathbb{CP}^n$ and $L(\uf_{\rho},\uf_{\rho}^{\bot})$ are given the Koszul--Malgrange holomorphic structures defined from their respective connections. By Lemma \ref{A-holo-subbundles-lemma}, $A_+$ and $A_-$ are holomorphic subbundles of $L(\uf_{\rho},\uf_{\rho}^{\bot})$, so we have
\begin{align}\label{H^0-H^1-equation}
\dim H^0(M_g,\phi^{-1}T^{(1,0)}\mathbb{CP}^n) &= \dim H^0(M_g, A_+) + \dim H^0(M_g, A_-) \nonumber\\
&\geq \dim H^0(M_g, A_+) \nonumber \\
&\geq \dim H^0(M_g, A_+) - \dim H^1(M_g, A_+),
\end{align}
as $\dim H^0(M_g, A_-) \geq 0$ and $ \dim H^1(M_g, A_+) \geq 0$. By Corollary \ref{RiemannRoch-cor} the right-hand side of 
(\ref{H^0-H^1-equation}) is independent of choice of the holomorphic structure on $A_+$. Using this, we replace the holomorphic structure of $A_+$ induced by the Koszul--Malgrange holomorphic structure of $L(\uf_{\rho},\uf_{\rho}^{\bot})$ with the holomorphic structure defined by (\ref{d_sum-holo-structure-equation}). Applying Lemma \ref{holo-sum-lemma} to $\dim H^0(M_g, A_+) - \dim H^1(M_g, A_+)$ with $A_+$ equipped with the holomorphic structure $\bar{\partial}_{\text{sum}}$, we have
\begin{align*}
\dim H^0(M_g,\phi^{-1}T^{(1,0)}\mathbb{CP}^n) &\geq \dim H^0(M_g, A_+) - \dim H^1(M_g, A_+) \\
&= \sum_{j=\rho+1}^n \{\dim H^0(M_g, L(\uf_{\rho},\uf_{j}))-\dim H^1(M_g, L(\uf_{\rho},\uf_{j}))\}.
\end{align*}
\end{proof}
By noting that the degree of $\phi$ is minus the  first Chern class $c_1$ of the bundle $\underline{\phi}$ (see \cite[Lemma 5.1]{BurWood}), we have from \cite[p.\ 246]{EellsWood}, given a harmonic map $\phi:M_g \rightarrow \mathbb{CP}^n$ with directrix $(f,\rho)$ then
\begin{equation}\label{chern}
c_1(\phi)=\sum_{\alpha=0}^{\rho-1}r_{\alpha}-\deg(f)+\rho(2-2g).
\end{equation}
We deduce the following.
\begin{theorem}\label{general-index-prop}
Let $M_g$ be a compact Riemann surface of genus $g$, $\phi: M_g \rightarrow \mathbb{CP}^n$ be a full non-$\pm$-holomorphic complex isotropic map with directrix $(f, \rho)$, and let $r_\alpha$ be the $(\alpha+1)$st total ramification index of $f$, then
\begin{equation}\label{general-index-equation}
\Index(\phi) \geq (n+1)\deg(f)-\sum_{\alpha=0}^{\rho -1} (n-\alpha) r_{\alpha} + (2n\rho-\rho^2+2\rho-n)(g-1).
\end{equation}
\end{theorem}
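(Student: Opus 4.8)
The plan is to combine the index estimate of Lemma~\ref{index>dim} with the lower bound for $\dim H^0(M_g,\phi^{-1}T^{(1,0)}\mathbb{CP}^n)$ provided by Proposition~\ref{dimH^0 geq dimH^0-dimH^1-prop}, and then to evaluate each summand $\dim H^0(M_g, L(\uf_{\rho},\uf_{j}))-\dim H^1(M_g, L(\uf_{\rho},\uf_{j}))$ using Riemann--Roch (Theorem~\ref{RiemannRoch}). Since $L(\uf_{\rho},\uf_{j})$ is a holomorphic line bundle (both factors are line bundles), Riemann--Roch reduces each term to $c_1\bigl(L(\uf_{\rho},\uf_{j})\bigr)+(1-g)$. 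Because $L(\uf_{\rho},\uf_{j})\cong \uf_{\rho}^{*}\otimes\uf_{j}$, its first Chern class is $c_1(\uf_{j})-c_1(\uf_{\rho})$. So the chain of inequalities gives
\begin{equation*}
\Index(\phi)\geq \sum_{j=\rho+1}^{n}\bigl\{c_1(\uf_{j})-c_1(\uf_{\rho})+(1-g)\bigr\}.
\end{equation*}

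The key computational step is therefore to express every $c_1(\uf_{j})$ in terms of the data $(\deg(f),\rho,r_\alpha,g)$. Here I would invoke formula~(\ref{chern}): for each index $j$ in the harmonic sequence, the map $f_j$ has directrix $(f,j)$, so that $c_1(\uf_{j})=\sum_{\alpha=0}^{j-1}r_{\alpha}-\deg(f)+j(2-2g)$, with the convention $c_1(\uf_0)=-\deg(f)$. Substituting these expressions for $c_1(\uf_j)$ (for $j=\rho+1,\dots,n$) and for $c_1(\uf_\rho)$ into the inequality above turns the bound into a double sum over ramification indices plus linear terms in $\deg(f)$, $n$, $\rho$ and $g$. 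First I would isolate the $\deg(f)$ contribution: each of the $n-\rho$ summands contributes $-\deg(f)-(-\deg(f))=0$ from the $-\deg(f)$ pieces, but the constant $-\deg(f)$ from $c_1(\uf_\rho)$ combines with the structure so as to recover the coefficient $(n+1)$; this bookkeeping must be done carefully.

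The main labour, and the step most likely to conceal an arithmetic slip, is collecting the ramification terms. The difference $c_1(\uf_j)-c_1(\uf_\rho)$ contributes $\sum_{\alpha=\rho}^{j-1}r_\alpha$, so summing over $j=\rho+1,\dots,n$ produces $\sum_{j=\rho+1}^{n}\sum_{\alpha=\rho}^{j-1}r_\alpha$; switching the order of summation rewrites this as $\sum_{\alpha=\rho}^{n-1}(n-\alpha)r_\alpha$. One must check that this matches the claimed coefficient $-\sum_{\alpha=0}^{\rho-1}(n-\alpha)r_\alpha$ in~(\ref{general-index-equation}); the apparent mismatch in the range of $\alpha$ suggests that relating the ramification indices of $\phi$ (whose directrix is $(f,\rho)$) to those of $f$ requires an index shift, and reconciling the two labelings of total ramification indices is the delicate point. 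The genus terms, by contrast, assemble routinely: the $j(2-2g)$ and $\rho(2-2g)$ pieces together with the $(n-\rho)(1-g)$ from Riemann--Roch combine into a quadratic in $\rho$ and $n$ multiplied by $(g-1)$, which I would simplify directly to obtain $(2n\rho-\rho^2+2\rho-n)(g-1)$.

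I expect the main obstacle to be the correct identification of the ramification bookkeeping, i.e. ensuring that the total ramification indices $r_\alpha$ attached to the harmonic sequence of the \emph{directrix} $f$ enter the final formula with the stated ranges and signs; the Chern-class and Riemann--Roch inputs are individually straightforward, but their interaction through~(\ref{chern}) is where the quadratic-in-$\rho$ genus term and the ramification sum must be made to agree with~(\ref{general-index-equation}).
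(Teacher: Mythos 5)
Your proposal follows the paper's argument correctly up to the intermediate bound: combining Lemma~\ref{index>dim}, Proposition~\ref{dimH^0 geq dimH^0-dimH^1-prop}, Riemann--Roch (Theorem~\ref{RiemannRoch}) and the Chern class formula~(\ref{chern}) does give
\begin{equation*}
\Index(\phi)\;\geq\;-(n-\rho)(n-\rho+2)(g-1)+\sum_{\alpha=\rho}^{n-1}(n-\alpha)r_{\alpha},
\end{equation*}
exactly as you derive by switching the order of summation. Note, however, that the $-\deg(f)$ terms in $c_1(\uf_j)-c_1(\uf_\rho)$ cancel identically, so no $\deg(f)$ survives at this stage; your hope that ``the constant $-\deg(f)$ from $c_1(\uf_\rho)$ combines with the structure so as to recover the coefficient $(n+1)$'' cannot work.

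The genuine gap is the final conversion. It is not, as you suggest, a matter of ``an index shift'' or of ``reconciling two labelings of total ramification indices'': the $r_\alpha$ in the theorem are the ramification indices of $f$ with precisely the labeling you already used. The missing ingredient is a global identity, the Pl\"ucker formula of \cite[p.\ 271]{GriffHarris} (equation~(\ref{ramrelation}) in the paper),
\begin{equation*}
\sum_{\alpha=0}^{n-1}(n-\alpha)r_{\alpha}=(n+1)\deg(f)+n(n+1)(g-1),
\end{equation*}
which is where $\deg(f)$ enters the argument at all. Solving it for $\sum_{\alpha=\rho}^{n-1}(n-\alpha)r_{\alpha}$ and substituting into the intermediate bound is what produces the $(n+1)\deg(f)$ term, converts the ramification sum over $\alpha\in\{\rho,\dots,n-1\}$ into $-\sum_{\alpha=0}^{\rho-1}(n-\alpha)r_{\alpha}$, and adds $n(n+1)(g-1)$ to the genus coefficient; indeed $-(n-\rho)(n-\rho+2)+n(n+1)=2n\rho-\rho^2+2\rho-n$, which is the coefficient in~(\ref{general-index-equation}). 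Without the Pl\"ucker relation your genus terms do not ``assemble routinely'' to $(2n\rho-\rho^2+2\rho-n)(g-1)$, and no amount of re-indexing the ramification sum will bridge the difference.
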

\begin{proof}
For each $j \in \{ \rho+1, \dots, n\}$, we have using (\ref{chern}) that
$$c_1(L(\uf_{\rho},\uf_{j})) = c_1(\uf_{\rho}^* \otimes \uf_{j}) =-c_1(\uf_{\rho})+c_1(\uf_{j})=-(j-\rho)(2g-2)+\sum_{\alpha=\rho}^{j-1}r_{\alpha}.$$
Therefore for each $j \in \{ \rho+1, \dots, n\}, \,$ Theorem \ref{RiemannRoch} (Riemann-Roch) gives
\begin{align*}
\dim H^0(M_g, L(\uf_{\rho},\uf_{\alpha}))-\dim H^1(M_g, L(\uf_{\rho},\uf_{\alpha})) &=c_1(L(\uf_{\rho},\uf_{j}))+1-g \\
&=-(2j-2\rho+1)(g-1)+\sum_{\alpha=\rho}^{j-1}r_{\alpha}.
\end{align*}
Using this together with Proposition \ref{dimH^0 geq dimH^0-dimH^1-prop} we have
\begin{align}\label{dimH^0-inequality-1}
\dim H^0(M_g,&\phi^{-1}T^{(1,0)}\mathbb{CP}^n) \geq \sum_{j=\rho+1}^n \{\dim H^0(M_g, L(\uf_{\rho},\uf_{j}))-\dim H^1(M_g, L(\uf_{\rho},\uf_{j}))\} \nonumber\\
&=\sum_{j=\rho+1}^n \{c_1(L(\uf_{\rho},\uf_{j}))+1-g\} \nonumber\\
&=\Big((n-\rho)(2\rho-1)-n(n+1)+\rho(\rho+1)\Big)(g-1)+\sum_{\alpha=\rho}^{n-1}(n-\alpha)r_{\alpha}.
\end{align}
From \cite[p.\ 271]{GriffHarris} we have a useful relation involving the total ramification indices $r_{\alpha}$:
\begin{equation}\label{ramrelation}
\sum_{\alpha=0}^{n-1} (n-\alpha) r_{\alpha}=(n+1)\deg(f)+n(n+1)(g-1).
\end{equation}
We split this sum so that we have 
\begin{equation}\label{splitramrelation}
\sum_{\alpha=\rho}^{n-1} (n-\alpha) r_{\alpha}=(n+1)\deg(f)+n(n+1)(g-1)-\sum_{\alpha=0}^{\rho -1} (n-\alpha) r_{\alpha}.
\end{equation}
By substituting (\ref{splitramrelation}) into (\ref{dimH^0-inequality-1}) we have the following:
\begin{align*}
\dim H^0(M_g,&\phi^{-1}T^{(1,0)}\mathbb{CP}^n) \geq (n+1)\deg(f)-\sum_{\alpha=0}^{\rho -1} (n-\alpha) r_{\alpha} + \\
& \quad \Big((n-\rho)(2\rho-1)-n(n+1)+\rho(\rho+1)+n(n+1)\Big)(g-1) \\
&=(n+1)\deg(f)-\sum_{\alpha=0}^{\rho -1} (n-\alpha) r_{\alpha} + (2n\rho-\rho^2+2\rho-n)(g-1).
\end{align*}
By Lemma \ref{index>dim} the theorem is proven.
\end{proof}
\begin{corollary}\label{general-index-corollary}
Let $M_g$ be a compact Riemann surface of genus $g$, $\phi: M_g \rightarrow \mathbb{CP}^n$ be a full non-$\pm$-holomorphic complex isotropic map with directrix $(f, \rho)$, and let $r_\alpha$ be the $(\alpha+1)$st total ramification index of $f$, then
$$\Index(\phi) \geq (n+1)\deg(\phi)+(n+\rho^2)(1-g)+\sum_{\alpha=0}^{\rho-1}(\alpha+1)r_{\alpha}.$$
\end{corollary}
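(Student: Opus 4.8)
The plan is to deduce this directly from Theorem \ref{general-index-prop} by trading the degree of the directrix $f$ for the degree of $\phi$ itself, using the Chern-class relation (\ref{chern}). Since $\phi$ has directrix $(f,\rho)$ we have $\underline{\phi}=\uf_{\rho}$, so the quantity $c_1(\phi)$ appearing in (\ref{chern}) is precisely $c_1(\uf_{\rho})$, and by \cite[Lemma 5.1]{BurWood} the degree satisfies $\deg(\phi)=-c_1(\phi)$.

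First I would rearrange (\ref{chern}) to isolate $\deg(f)$. Writing $\deg(\phi)=-c_1(\phi)$ and substituting into (\ref{chern}) gives
\[
\deg(f)=\deg(\phi)+\sum_{\alpha=0}^{\rho-1}r_{\alpha}+\rho(2-2g).
\]
Next I would insert this expression for $\deg(f)$ into the right-hand side of the bound (\ref{general-index-equation}) of Theorem \ref{general-index-prop}. This reduces the entire argument to collecting two groups of terms: the ramification contributions and the genus contributions.

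The ramification terms combine cleanly: the coefficient of $r_{\alpha}$ becomes $(n+1)-(n-\alpha)=\alpha+1$, so that the combined ramification contribution $(n+1)\sum_{\alpha=0}^{\rho-1}r_{\alpha}-\sum_{\alpha=0}^{\rho-1}(n-\alpha)r_{\alpha}$ collapses to $\sum_{\alpha=0}^{\rho-1}(\alpha+1)r_{\alpha}$, which is exactly the sum in the claimed bound. The genus terms require slightly more care: the piece $(n+1)\rho(2-2g)=-2\rho(n+1)(g-1)$ coming from $\deg(f)$ must be added to the existing $(2n\rho-\rho^2+2\rho-n)(g-1)$ in (\ref{general-index-equation}). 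Expanding, the terms $-2\rho n$ and $+2n\rho$ cancel and the terms $-2\rho$ and $+2\rho$ cancel, leaving coefficient $-(n+\rho^2)$ of $(g-1)$, i.e.\ $(n+\rho^2)(1-g)$.

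I expect no genuine obstacle here, since the whole statement is an algebraic rearrangement of Theorem \ref{general-index-prop}. The only points demanding attention are the bookkeeping of the two cancellations above and the sign convention $\deg(\phi)=-c_1(\phi)$, together with the reading of (\ref{chern}) as $c_1(\phi)=c_1(\uf_{\rho})$. Once these are in place, combining the collected terms with the leading $(n+1)\deg(\phi)$ yields the stated inequality, completing the proof.
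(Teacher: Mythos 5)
Your proposal is correct and is essentially the paper's own proof: the paper likewise rearranges (\ref{chern}) into $\deg(f)=\deg(\phi)+\sum_{\alpha=0}^{\rho-1}r_{\alpha}-\rho(2g-2)$ and substitutes this into (\ref{general-index-equation}). Your explicit verification of the two cancellations (the $r_\alpha$ coefficients collapsing to $\alpha+1$ and the genus coefficient reducing to $-(n+\rho^2)$) simply spells out the algebra the paper leaves to the reader.
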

\begin{proof}
By (\ref{chern}) we have
\begin{equation}\label{degreesrel}
\deg (f)= \deg (\phi) +\sum_{\alpha=0}^{\rho-1} r_{\alpha}-\rho(2g-2).
\end{equation}
Substituting this into (\ref{general-index-equation}) gives the desired result.
\end{proof}
\begin{remark}\label{theorem improvement remark}
Theorem \ref{general-index-prop} is an improvement on known estimates (Proposition \ref{estimate}) if and only if
$$\sum_{\alpha=0}^{\rho-1} (\alpha+1) r_{\alpha}>\rho^2(g-1).$$
Further, Theorem \ref{general-index-prop} is an improvement on Proposition \ref{estimate} for harmonic maps $\phi:S^2 \rightarrow \mathbb{CP}^n$ and $\phi:M_1 \rightarrow \mathbb{CP}^n$ with directrix $(f,\rho)$ by amounts ${\rho^2+\sum_{\alpha=0}^{\rho-1}(\alpha+1)r_{\alpha}}$ and ${\sum_{\alpha=0}^{\rho-1}(\alpha+1)r_{\alpha}}$ respectively.
\end{remark}
\begin{corollary}\label{Sphere - CP2 Corollary}
Let $\phi:S^2 \rightarrow \mathbb{CP}^2$ be a full non-$\pm$-holomorphic harmonic map with directrix $(f,1)$ and let $r_0$ be the first total ramification index of f. Then
$$\Index(\phi) \geq 3\deg(f)-2r_0-3 = 3\deg(\phi)+r_0+3.$$
\end{corollary}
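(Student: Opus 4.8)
The plan is to derive both stated lower bounds by specialising the two general results already proved---Theorem \ref{general-index-prop} and Corollary \ref{general-index-corollary}---to the parameters forced by the hypotheses: genus $g=0$ since the domain is $S^2$, dimension $n=2$ since the target is $\mathbb{CP}^2$, and $\rho=1$ from the directrix $(f,1)$. Because $\rho=1$, the only ramification index occurring in either sum is $r_0$, so each bound reduces to an expression involving just a degree and $r_0$.

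First I would substitute into the bound of Theorem \ref{general-index-prop}. With $n=2$, $\rho=1$, $g=0$ the term $(n+1)\deg(f)$ becomes $3\deg(f)$, the ramification sum $\sum_{\alpha=0}^{\rho-1}(n-\alpha)r_\alpha$ collapses to $(2-0)r_0=2r_0$, and the genus coefficient evaluates as $(2n\rho-\rho^2+2\rho-n)(g-1)=(4-1+2-2)(-1)=-3$. This yields the first claimed expression, $\Index(\phi)\geq 3\deg(f)-2r_0-3$.

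For the second expression I would either specialise Corollary \ref{general-index-corollary} directly---its bound $(n+1)\deg(\phi)+(n+\rho^2)(1-g)+\sum_{\alpha=0}^{\rho-1}(\alpha+1)r_\alpha$ becomes $3\deg(\phi)+3+r_0$ under the same substitution---or equivalently verify that the two right-hand sides coincide using the degree relation \eqref{degreesrel}. For $\rho=1$, $g=0$ the latter reads $\deg(f)=\deg(\phi)+r_0+2$; inserting this into $3\deg(f)-2r_0-3$ gives $3\deg(\phi)+3r_0+6-2r_0-3=3\deg(\phi)+r_0+3$, confirming the stated equality.

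There is no substantive obstacle: the corollary is a purely numerical specialisation of results established earlier, and its only content is careful bookkeeping of the constants. The one point deserving a moment's attention is the sign and evaluation of the genus-dependent coefficient $(2n\rho-\rho^2+2\rho-n)$ when $g-1=-1$, together with the consistency check via \eqref{degreesrel} that the two displayed forms of the bound are indeed equal.
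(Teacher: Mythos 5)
Your proposal is correct and follows exactly the paper's own route: the paper proves this corollary by specialising Theorem \ref{general-index-prop} and Corollary \ref{general-index-corollary} to $n=2$, $\rho=1$, $g=0$, which is precisely your substitution. Your arithmetic, including the coefficient $(2n\rho-\rho^2+2\rho-n)(g-1)=-3$ and the consistency check via \eqref{degreesrel}, is all accurate.
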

\begin{proof}
This follows immediately from Theorem \ref{general-index-prop} and Corollary \ref{general-index-corollary} for $n=2$, $\rho=1$ and $g=0$.
\end{proof}
\begin{corollary}\label{Torus - CP2 Corollary}
Let $\phi:M_1 \rightarrow \mathbb{CP}^2$ be a full non-$\pm$-holomorphic complex isotropic harmonic map with directrix $(f,1)$ and let $r_0$ be the first total ramification index of f. Then
$$\Index(\phi) \geq 3\deg(f)-2r_0 = 3\deg(\phi)+r_0.$$
\end{corollary}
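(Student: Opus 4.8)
The plan is to specialize the two general results already established---Theorem \ref{general-index-prop} and Corollary \ref{general-index-corollary}---to the parameters forced by the hypotheses, namely $n=2$ (the target is $\mathbb{CP}^2$), $\rho=1$ (the directrix is $(f,1)$), and $g=1$ (the domain is the torus $M_1$). No new geometric input is required; the argument is purely arithmetic, mirroring the proof of Corollary \ref{Sphere - CP2 Corollary} but with $g=1$ in place of $g=0$.

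First I would substitute into the bound of Theorem \ref{general-index-prop}. The leading term $(n+1)\deg(f)$ becomes $3\deg(f)$; the ramification sum $\sum_{\alpha=0}^{\rho-1}(n-\alpha)r_{\alpha}$ collapses, for $\rho=1$, to the single term $(2-0)r_0 = 2r_0$; and the genus term $(2n\rho-\rho^2+2\rho-n)(g-1)$ vanishes because $g-1=0$. This yields the first claimed inequality $\Index(\phi) \geq 3\deg(f)-2r_0$.

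Next I would substitute into Corollary \ref{general-index-corollary} to recover the expression in terms of $\deg(\phi)$. Here $(n+1)\deg(\phi) = 3\deg(\phi)$, the term $(n+\rho^2)(1-g)$ again vanishes since $g=1$, and the sum $\sum_{\alpha=0}^{\rho-1}(\alpha+1)r_{\alpha}$ reduces to $r_0$, giving $\Index(\phi) \geq 3\deg(\phi)+r_0$.

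Finally I would confirm that the two right-hand sides agree, which is the only point that needs checking. This follows from the degree relation (\ref{degreesrel}): specializing to $\rho=1$ and $g=1$ it reads $\deg(f)=\deg(\phi)+r_0$, whence $3\deg(f)-2r_0 = 3(\deg(\phi)+r_0)-2r_0 = 3\deg(\phi)+r_0$. I anticipate no genuine obstacle; the only thing to watch is the bookkeeping of which ramification indices survive the truncation of the sums at $\rho=1$ and the repeated use of $g-1=0$ to kill the genus-dependent coefficients.
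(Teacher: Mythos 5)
Your proposal is correct and is exactly the paper's argument: the paper proves this corollary by specializing Theorem \ref{general-index-prop} and Corollary \ref{general-index-corollary} to $n=2$, $\rho=1$, $g=1$, just as you do. Your explicit arithmetic, including the consistency check via (\ref{degreesrel}), fills in precisely the computations the paper leaves implicit.
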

\begin{proof}
This follows immediately from Theorem \ref{general-index-prop} and Corollary \ref{general-index-corollary} for $n=2$, $\rho=1$ and $g=1$.
\end{proof}
\begin{remark}\label{corollary improvement remark}
Corollary \ref{Sphere - CP2 Corollary} and Corollary \ref{Torus - CP2 Corollary} are improvements for a harmonic map $\phi:S^2 \rightarrow \mathbb{CP}^2$ and $\phi:M_1 \rightarrow \mathbb{CP}^2$ with directrix $(f,1)$ by the amount $1+ r_0$ and $r_0$, respectively.
\end{remark}

\section{Examples}
We present examples of harmonic maps for genus $0,1$, and higher genera for which the known estimates on the index are improved by Corollary \ref{Sphere - CP2 Corollary}, Corollary \ref{Torus - CP2 Corollary} and Theorem \ref{general-index-prop} respectively.
\subsection{Genus 0}
We define the family of maps $\eta_k:S^2\rightarrow S^2$ given by $\eta_k(z)=z^k$ for $k\in \mathbb{Z}$: all $\eta_k$ are $\pm$-holomorphic.
\begin{example}cf. \cite[Example 8.1]{EellsWood}
Let $F:S^2 \rightarrow \mathbb{C}^3\setminus\{0\}$, where $F(z)=(1,z,z^2)$. Then let $f=[F]:S^2 \rightarrow \mathbb{CP}^2$, so $f(z)=[1,z,z^2]$ and is a full holomorphic map with $\deg(f)=2$ and $r_0=0$. Following $\S 2.1$ we have that $G'(f):S^2 \rightarrow \mathbb{CP}^2$ is a full non-$\pm$-holomorphic (complex isotropic) harmonic map of degree $0$ and directrix $(f,1)$.
For each $k\in\mathbb{N}$ the composition $f \circ \eta_k:S^2 \rightarrow \mathbb{CP}^2$ gives a full holomorphic map with $\deg(f \circ \eta_k)=2k$ and $r_0=2(k-1)$. For each $k\in\mathbb{N}$, $G'(f \circ \eta_k):S^2 \rightarrow \mathbb{CP}^2$ gives a full non-$\pm$-holomorphic harmonic map of degree $0$ and directrix $(f \circ \eta_k,1)$. By Corollary \ref{Sphere - CP2 Corollary}, $\Index (G'(f \circ \eta_k)) \geq 3\deg (f \circ \eta_k) - 2 r_0-3=6k-4(k-1)-3=2k+1.$ By Remark \ref{corollary improvement remark}, for each $k\in\mathbb{N}$, Corollary \ref{Sphere - CP2 Corollary} improves the estimate in \cite{EellsWood} (see Proposition \ref{estimate} above) by $2k-1$.
\end{example}
\begin{example}cf. \cite[Example 8.2]{EellsWood}\label{S1 - deg 1 - Example}
Let $F:S^2 \rightarrow \mathbb{C}^3\setminus\{0\}$, where $F(z)=(1,z+z^3,z^2)$. Then let $f=[F]:S^2 \rightarrow \mathbb{CP}^2$, so $f(z)=[1,z+z^3,z^2]$ and is a full holomorphic map with $\deg(f)=3$ and $r_0=0$. Again following $\S 2.1$ we have that $G'(f):S^2 \rightarrow \mathbb{CP}^2$ is a full non-$\pm$-holomorphic (complex isotropic) harmonic map of degree $1$ and directrix $(f,1)$. 
For each $k\in\mathbb{N}$ the composition $f \circ \eta_k:S^2 \rightarrow \mathbb{CP}^2$ gives a full holomorphic map with $\deg(f \circ \eta_k)=3k$ and $r_0=2(k-1)$. For each $k\in\mathbb{N}$, $G'(f \circ \eta_k):S^2 \rightarrow \mathbb{CP}^2$ gives a full non-$\pm$-holomorphic harmonic map of degree $k$ and directrix $(f \circ \eta_k,1)$. By Corollary \ref{Sphere - CP2 Corollary}, $\Index (G'(f \circ \eta_k)) \geq 3\deg (f \circ \eta_k) - 2 r_0-3=9k-4(k-1)-3=5k+1.$ By Remark \ref{corollary improvement remark}, for each $k\in\mathbb{N}$, Corollary \ref{Sphere - CP2 Corollary} improves the estimate in \cite{EellsWood} (see Proposition \ref{estimate} above) by $2k-1$.
\end{example}
\subsection{Genus 1}
Let $M_1$, $M'_1$ be tori, i.e. compact Riemann surfaces of genus $1$ and $\psi:M_1\rightarrow M'_1$ a holomorphic covering map of degree $k$.
\begin{example}
Let $f: M'_1 \rightarrow \mathbb{CP}^2$ be the degree 5 full holomorphic map with first total ramification index $4$ constructed in \cite[Lemma 8.7]{EellsWood}. The composition $f \circ \psi$ is a full holomorphic map with $\deg(f \circ \psi)=5k$ and $r_0=4k$. As an application of Corollary \ref{Torus - CP2 Corollary} let $G'(f \circ \psi): M_1 \rightarrow \mathbb{CP}^2$ be the degree $k$ harmonic non-$\pm$-holomorphic map with directrix $(f \circ \psi,1)$ then $\Index (G'(f \circ \psi)) \geq 3\deg (f \circ \psi) - 2 r_0=15k-8k=7k.$ By Remark \ref{corollary improvement remark}, for each $k\in\mathbb{N}$, Corollary \ref{Torus - CP2 Corollary} improves the estimate in \cite{EellsWood} (see Proposition \ref{estimate} above) by $4k$.
\end{example}

\subsection{Higher genera}
Let $M_g$ be a compact Riemann surface of genus $g>1$. 
\begin{example}
By \cite[Theorem 8.10]{EellsWood} there exist full non-$\pm$-holomorphic complex isotropic harmonic maps $\phi:M_g \rightarrow \mathbb{CP}^2$ of degree $k > g$. Indeed there exist holomorphic maps $h:M_g\rightarrow\mathbb{CP}^1$ of all degrees $k > g$. Composing such a map with the full harmonic map of degree $1$ with directrix $(f,1)$ from Example \ref{S1 - deg 1 - Example} gives a full non-$\pm$-holomorphic complex isotropic harmonic map of degree $k > g$. This is the Gauss transform of the full holomorphic map $f \circ h$ which has degree $3k$. From (\ref{degreesrel}) $r_0 = 2k+2g-2 > g-1$: Therefore by Remark \ref{theorem improvement remark}, Theorem \ref{general-index-prop} improves the estimate in \cite{EellsWood} (see Proposition \ref{estimate} above) for all these maps, giving examples in all degrees $>g$.
\end{example}


\begin{thebibliography}{9}

\bibitem{Aleman}
Aleman, A.; Pacheco, R.; Wood, J. C.: \emph{Harmonic maps and shift-invariant subspaces}. Preprint (2018), arXiv:1812.09379.

\bibitem{BurSal}
Burstall, F.; Salamon S. M.: \emph{Tournaments, Flags, and Harmonic Maps}. Math. Ann. 277 (1987), 249--265.

\bibitem{BurWood}
Burstall, F.; Wood, J. C.: \emph{The construction of harmonic maps into complex Grassmannians}. J. Differential Geom. 23 (1986), 255--297. 

\bibitem{BoltonWoodward}
Bolton, J.; Woodward, L. M.: \emph{Congruence theorems for harmonic maps from a Riemann surface into $\mathbb{CP}^n$ and $S^n$}. J. London Math. Soc. (2) 45 (1992), 363--376

\bibitem{Bolton}
Bolton, J.; Jensen, G. R.; Rigoli, M.; Woodward, L. M.: \emph{On conformal minimal immersions $of S^2$ into $\mathbb{C}P^2$}. Math. Ann. 275 (1988), 599--620.

\bibitem{BurnsBart}
Burns, D.; De Bartolomeis, P.: \emph{Applications harmoniques stables dans $P^n$}. Ann. Sci. \'{E}cole Norm. Sup. 21 (1988), 159--187. 

\bibitem{Burns et. al.}
Burns, D.; Burstall, F.; De Bartolomeis, P.; Rawnsley, J.: \emph{Stability of harmonic maps of K\"{a}hler manifolds}. J Differential Geom. 30 (1989), 579--594.

\bibitem{ChernWolfson}
Chern, S. S.; Wolfson, J: \emph{Minimal surfaces by moving frames}. Amer. J. Math. 105 (1983),
59--83.

\bibitem{Craw}
Crawford, T. A.: \emph{The space of harmonic maps from the 2-sphere to the complex projective plane}. Canad. Math. Bull. 40 (1997), no. 3, 285--295.

\bibitem{EellsLem}
Eells, J.; Lemaire, L.: \emph{Another report on harmonic maps}. Bull. London Math. Soc. 20 (1988), no. 5, 385--524. 
 
\bibitem{EellsWood}
Eells, J.; Wood, J. C.: \emph{Harmonic maps from surfaces to complex projective spaces}. Adv. in Math. 49 (1983), no. 3, 217--263. 

\bibitem{EichlerZagier}
Eichler, M.; Zagier, M.: \emph{On the zeros of the Weierstrass $\wp$-Function}. Math. Ann. 258  (1982), 399--407.
 

\bibitem{GriffHarris}
Griffiths, P.; Harris, J.: \emph{Principles of algebraic geometry}. Wiley Interscience, New York  (1978).
  
\bibitem{Gunning}
Gunning, R. C.: \emph{Lectures on vector bundles over Riemann surfaces}. Princeton University Press, Mathematical Notes 6 (1967).  
  

\bibitem{Knapp}
Knapp, A. W.: \emph{Elliptic curves}. Princeton University Press, Mathematical notes 40 (1992).

\bibitem{KosMel}
Koszul, J. L.; Malgrange, B.: \emph{Sur certaines structures fibrées complexes}. Arch. Math. 9 (1958), 102--109.

\bibitem{LemWood2}
Lemaire, L.; Wood, J. C.: \emph{Jacobi fields along harmonic 2-spheres in $\mathbb{C}P^2$ are integrable}. J. London Math. Soc. (2) 66 (2002), no. 2, 468--486.

\bibitem{LemWood1}
Lemaire, L.; Wood, J. C.: \emph{On the space of harmonic 2-spheres in $CP^2$}. Internat. J. Math. 7 (1996), no. 2, 211--225.

\bibitem{Oliver}
Oliver, J.: \emph{Harmonic Maps from surfaces to complex projective spaces and certain Lie groups}. Doctoral thesis (in preparation).

\bibitem{urakawa} 
Urakawa, H.: \emph{Calculus of variations and harmonic maps}.
 American Mathematical Society, Providence, RI (1993).

\bibitem{Wolfson1}
Wolfson, J.: \emph{On minimal surfaces in a K\"{a}hler surface of constant holomorphic sectional
curvature}. Trans. Amer. Math. Soc. 290 (1985), 627--646. 

\bibitem{Wolfson2}
Wolfson, J.: \emph{Harmonic sequences and harmonic maps of surfaces into complex Grassmann Manifolds} J. Differential Geom. 27 (1988), 161--178.


 
\end{thebibliography}
\end{document}